\documentclass[12pt]{article}%
\usepackage{amsmath}
\usepackage{amsfonts}
\usepackage{amssymb}
\usepackage{float}
\usepackage{graphicx}%
\setcounter{MaxMatrixCols}{30}
\providecommand{\U}[1]{\protect\rule{.1in}{.1in}}
\newtheorem{theorem}{Theorem}

\newtheorem{definition}[theorem]{Definition}

\newtheorem{lemma}[theorem]{Lemma}

\newtheorem{proposition}[theorem]{Proposition}

\newenvironment{proof}[1][Proof]{\noindent\textbf{#1.} }{\ \rule{0.5em}{0.5em}}
\begin{document}

\title{Social Distancing, Gathering, Search Games: \\Mobile Agents on Simple Networks}
\author{Steve Alpern$^{1}$ and Li Zeng$^{2}$\\$^{1}$Warwick Business School, $^{2}$Department of Statistics,\\University of Warwick, Coventry CV2 7AL, United Kingdom}
\date{}
\maketitle

\begin{abstract}
During epidemics, the population is asked to Socially Distance, with pairs of
individuals keeping two meters apart. We model this as a new optimization
problem by considering a team of agents placed on the nodes of a network.
Their common aim is to achieve pairwise graph distances of at least $D,$ a
state we call \textit{socially distanced}. (If $D=1,$ they want to be at
distinct nodes; if $D=2$ they want to be non-adjacent.) We allow only a simple
type of motion called a Lazy Random Walk: with probability $p$ (called the
\textit{laziness} parameter), they remain at their current node next period;
with complementary probability $1-p$ , they move to a random adjacent node.
The team seeks the common value of $p$ which achieves social distance in the
least expected time, which is the absorption time of a Markov chain.

We observe that the same Markov chain, with different goals (absorbing
states), models the gathering, or multi-rendezvous problem (all agents at the
same node). Allowing distinct laziness for two types of agents (searchers and
hider), extends the existing literature on predator-prey search games to
multiple searchers.

We consider only special networks: line, cycle and grid.

Keywords: epidemic, random walk, dispersion, rendezvous search

\end{abstract}

\newpage

\section{Introduction}

To combat epidemics, three actions are recommended to the public: mask
wearing, hand washing, social distancing. This paper models the last of these
in an abstract model of mobile agents on a network. Social distancing can be
considered a group goal (common-interest game) or individual goal
(antagonistic, non-cooperative game). We consider both goals in a dynamic
model where agents (players) walk on a network (graph). A group of $m$
players, or agents, is placed in some way on the nodes of a network $Q.$ Each
agent adopts a Lazy Random Walk (LRW) which stays at his current node with a
probability $p$ (called laziness) and moves to a random adjacent node with
complementary probability $1-p$ (called \textit{speed}). In the common
interest game, we seek a common value of $p$ which minimizes the time for all
pairs of players to be at least $D$ nodes apart (socially distanced). Once $p$
is adopted by all, the positions of the agents on the network (called states)
follow a Markov chain, with distanced states as absorbing. Standard elementary
results on absorption times for Markov chains are used to optimize $p$, to
find the value of $p$ which if adopted by all agents minimizes the absorption
time. This work can be seen as an extension to networks of the \textit{spatial
dispersion} problem introduced by Alpern and Reyniers (2002), where agents
could move freely between \textit{any} two locations. That paper also allowed
agents knowledge of populations at other locations, whereas here agents have
no knowledge of the whereabouts of other agents. (In some cases we do allow an
agent to know the population at his \textit{current node}, with laziness
$p_{i}$ at such a node dependent on the number $i$ of agents at the node.)

We observed that by changing the states which we call absorbing (and not
allowing transitions out of these), we can usefully model other existing
problems. For example, by taking the absorbing states as those where all
agents occupy a common node, we model the multi-rendezvous (or gathering)
problem, and our results extend known results for $m=2$ agents on a network
found in Alpern (1995). We assume the \textit{sticky} form of the problem,
where agents who meet coalesce into a single agent and move together
subsequently. Rendezvous problems were introduced by Anderson and Weber (1990)
and Alpern (1995) in discrete and continuous models. Rendezvous with agents on
graphs have been studied in Alpern, Baston and Essegaier (1999) and Alpern
(2002b). See also Gal (1999), Howard (1999) and Weber (2012). A survey is
given in Alpern (2002).

We also consider a model with two types of agents called \textit{searchers}
and \textit{hiders} (or predators and prey), who can choose different speeds.
Here the payoff is related to the search or capture time (when a searcher
coincides with a hider), with the searchers as minimizers and the hiders as
maximizers in a two-person team search game. These problems were proposed by
Isaacs (1965) and first studied by Zelikin (1972), Alpern (1974) and Gal
(1979), and later by many others. See Gal (1980) and Alpern and Gal (2003) for
monographs on search games. Until now, such problems with capture time payoffs
have mostly had one searcher and one hider. All of these problems start in
some prescribed, or possibly random, position and end when the desired
position is reached. For rendezvous or search (hide-seek) reaching the desired
position clearly ends the game, as all agents will know this. For dispersion,
some binary signal (a siren from an observing drone) might ring until a
distanced position is reached.

As mentioned above, the forerunner to social distancing problems is the
related spatial dispersion problem of Alpern and Reyniers (2002). They
consider $n$ \textit{locations} (with no network structure, so they did not
call them nodes, as we do here) with $n$ agents placed randomly on them. The
aim to obtain a \textit{dispersed} situation with one agent at each location
(two agents are not allowed to be at the same location). This would correspond
to a value of pairwise distance at least $D=1$ and $m=n$ in our model. We will
also consider this situation at times in this paper. They also considered
$m=kn$ agents with the aim of getting $k$ agents at each location. After each
period, the distribution of agents over the locations becomes common
knowledge. That problem modeled a situation where drivers can take any of $n$
bridges, let us say from New Jersey into Manhattan, and the distribution of
yesterday's traffic is announced every night. The aim is to equalize traffic
over the bridges for the common good. Grenager et al (2002) extended that work
to computer science areas and Blume and Franco (2007) to economics. See also
Simanjuntak (2014).

It is clear that ours is an extremely abstract approach to the problem of
social distancing. For a very recent practical analysis of the impact of
social distancing on deaths from Covid-19, including a monetary equivalent,
see Greenstone Nigam (2020). As this is the first paper to introduce the
Social Distancing Problem, we confine ourselves to the consideration of some
simple networks of small size: the line, cycle and lattice (grid) networks.

This paper is organized as follows. Section 2 describes our dynamic model of
agents moving on a network according to a common lazy random walk and derives
the associated Markov chain. A formula for the time to absorption (desired
state) is derived. Section 3 gives some simple examples where the agents
attempt to social distance on the cycle graph $C_{n}$. Section 4 considers
several different problems, all having three agents on $C_{3}:$ social
distancing (4.1), gathering (4.2), a zero sum game with a team of two
searchers seeking one hider (4.3). Section 5 considers gathering (5.1) and
social distancing (5.2) on $C_{5}.$ Section 6 considers a game where $n$
players start together an end of a line graph, each choosing their own
laziness in a lazy random walk. At the first time periods where some agents
are alone at their node, these agents split a unit prize. When $n=2$ (6.1)
\textit{any} pair $\left(  p,p\right)  $ is an equilibrium, but when $n=2$
(6.2) there is no symmetric equilibrium. In Section 7, we use Monte Carlo
simulation to study social distancing on larger grid (7.1) and line (7.2)
graphs. Section 8 concludes.

\section{The Dynamic Model: States and Lazy Random Walks}

The $m$ agents in our model move on a connected network $Q$ with $n$ nodes,
$n\geq m,$ labeled $1$ to $n.$ While we could do this analysis on a general
network with arbitrary arc lengths, we take a graph theoretic assumption where
all arc lengths are $1,$ so we will from now on call $Q$ a graph. In this
section we describe the dynamic model that we use throughout the paper. We do
this in three stages: States, Motion of agents, the resulting Markov chain.
Since we will restrict the arc length to 1 here (although other lengths could
be considered) we will henceforth use the term \textit{graph} instead of
\textit{network}.

\subsection{States of the system}

There are several ways to denote the \textit{state} of the system. A general
way is to write square brackets $\left[  j_{1},j_{2},\dots,j_{n}\right]  ,$
where $j_{i}$ is the number of agents at node $i,$ with $%
{\textstyle\sum\nolimits_{i=1}^{m}}
j_{i}=m.$ We call the number $j_{i}$ the \textit{population} of node $i.$ We
could also use a notation $k_{j}$ which indicates the node that agent $j$ is
occupying, but in this paper we have no need to know this. Attached to every
state is a number $d$ denoting the minimum distance between two agents, where
we use the graph distance between nodes (the number or arcs in a shortest
path). For example, if we number the nodes of the line graph $L_{6}$
consecutively, and the state is $\left[  1,0,0,1,0,1\right]  ,$ then $d=2.$ If
a state has distance $d,$ it is called \textit{socially distanced} if $d\geq
D,$ where $D$ is a parameter of the problem. For example, the state $\left[
1,0,0,1,0,1\right]  $ is socially distanced for $D=1$ and $D=2$ but not for
$D=3.$ For the social distancing problem, the states with $d\geq D$ are
considered the absorbing states, because we want to calculate the expected
time to reach one of them, and the expected time to absorption is a standard
problem for Markov chains. For other problems (gathering, search game), we
have different absorbing states. The set of all states, the state space, is
denoted $\mathcal{S}.$

\subsection{Lazy Random Walks}

The unifying idea of this paper is the use of agent motions of the following type.

\begin{definition}
A Lazy Random Walk (LRW) for an agent on the graph $Q,$ with laziness
parameter $p$ (and speed $q=1-p$) is as follows. With probability $p,$ stay at
your current node. With probability $q=1-p$, go equiprobably to any of the
$\delta$ adjacent nodes, where $\delta$ is the degree of your current node. If
$p=0$ this is called simply a Random Walk. If the graph has constant degree
$\Delta,$ then an LRW with $p=1/\left(  \Delta+1\right)  $ then the process is
called a Loop-Random Walk. That is because it would be a Random Walk if loops
were added to every node. That is, all adjacent nodes are chosen equiprobably,
including the current node.
\end{definition}

For various problems considered in the paper, random walks or loop-random
walks will be optimal, in terms of minimizing the mean time to reach the
desired state.

If all the agents in the model follow independent LRWs with the same value of
$p,$ i.e. our main assumption, then a Markov chain is thereby defined on the
state space $\mathcal{S}$. We only consider triples $m$ (number of agents),
$D$ desired distancing and $Q$ (the connected graph), where it is possible for
have distanced states. For example the triple $m=3,$ $D=2$ and $Q=C_{5}$
(cycle graph with 5 nodes) has no distanced states. In general, we assume that
the $D-$Independence number (maximum number of $D$ distanced nodes) is at
least $m.$ If $D=1$ this is called simply the independence number. If $n=m$
and $D=1$ we call this the spatial dispersion problem of Alpern an Reyniers
(2002), an important special case of social distancing.

Given $Q$ (with $n$ nodes), $m$ and $D,$ there is a Markov chain on the state
space $\mathcal{S}$ with a non-empty set of absorbing states $\mathcal{A}$.
Suppose we number the non-absorbing states as $1,2,\dots,N,$ and let $B $
denote the $N\times N$ matrix where $b_{i,j}$ is the transition probability
from state $i$ to state $j.$ Let $t$ be the vector $\left(  t_{i}\right)  $
denote the expected time (number of transition steps) to reach an absorbing
state from state $i.$ The $t_{i}$ then satisfy the simultaneous equations%
\begin{align}
t_{1}  &  =1+b_{11}t_{1}+\dots+b_{1j}t_{j}+\dots+b_{1n}t_{n}\label{simul}\\
&  \vdots\nonumber\\
t_{i}  &  =1+b_{i1}t_{1}+\dots+b_{ij}t_{j}+\dots+b_{in}t_{n}\nonumber\\
&  \vdots\nonumber\\
t_{n}  &  =1+b_{n1}t_{1}+\dots+b_{nj}t_{j}+\dots+b_{nn}t_{n}\nonumber
\end{align}
We can write this in matrix terms, where $J_{n}$ the $1$ by $n$ matrix of $1$s
and $I_{N}$ is the $N\times N$ identity matrix, as%

\begin{align*}
t  &  =J_{n}+Bt,\text{ or}\\
\left(  I_{n}-B\right)  t  &  =J_{n},\text{ with solution}\\
t  &  =\left(  I_{N}-B\right)  ^{-1}~J_{n}.
\end{align*}
So the solution for the absorption time vector $t$ is given by%
\begin{equation}
t=FJ_{n},\text{ where} \label{t}%
\end{equation}%
\begin{equation}
F=\left(  I_{N}-B\right)  ^{-1}~J_{n}\text{ is known as the
\textit{fundamental matrix}.} \label{F}%
\end{equation}
In our model the Markov chain has a parameter $p,$ so these times
$t_{i}\left(  p\right)  $ will depend on $p.$ This use of the fundamental
matrix to calculate absorption times (\ref{F}) is well known. For example see
Section 8 of Kemeny, Snell and Thompson (1974). We use this formula (\ref{F})
often in this paper, starting in Section 3.1. In Section 4.2 we do the same
calculation using an equivalent method with the original simultaneous equations.

In some applications (e.g. two searchers for one hider) we wish to know the
probability that a particular absorbing state is reached (which searcher finds
the hider). Formulae for this problem are also known, but in the event we find
a more direct way to determine this. This will be made clear in Section 4.4.

A useful variation is to allow agents to see the number $k$ of agents at their
node, the \textit{population} of the node. In this case may allow a laziness
$p_{k}$ that depends on this $k.$ In most cases we consider the problem of
finding the laziness value $p=\bar{p}$ which minimizes the absorption time.
However in search game models considered in Sections 4.3 and 4.4, the hider
wishes to maximize the expected absorption time while the searcher wishes to
minimize it. We will define the gathering and search game models when they are
introduced, respectively in Sections 4.2 and 4.3. There are also cases where
individual agents do not have the same goal, for example in Section 6.

A useful variation is to allow agents to see the number $k$ of agents at their
node, the \textit{population} of the node. In this case may allow a laziness
$p_{k}$ that depends on this $k.$ For example if I find myself at a node with
three other agents, I stay there with probability $p_{4},$ which is a number
that is part of the overall strategy. But generally, and unless stated, we
assume there is only one value of $p$ regardless of the population of the node.

\section{ Social Distancing on $C_{n},n\geq4$ with $D=2$}

Generally we will consider problems with at least $m=3$ agents on a graph, but
to illustrate the main concepts of the paper we begin with a simple example
where two agents who start in adjacent nodes try to achieve distance $D=2$ on
a cycle graph $C_{n}$ with $n\geq4$ nodes. It turns out that the cases
$n\geq5$ and $n=4$ have different solutions. We take advantage of the symmetry
of the cycle graph to use a reduced state space determined by the distance $j$
between the agents. State $j$ covers all configurations where this distance is
$j-1,$ so that we have the usual row and column numbers for our matrices. The
three states $j=1,2,3$ are depicted in Figure 1 for both $C_{5}$ (top) and
$C_{4}$ bottom. For both cases of $n,$ there are (up to symmetry) two
non-absorbing states ($1$ and $2$) and a single absorbing state $3.$%
\begin{figure}[H]
    \centering
    \includegraphics[width=0.85\textwidth]{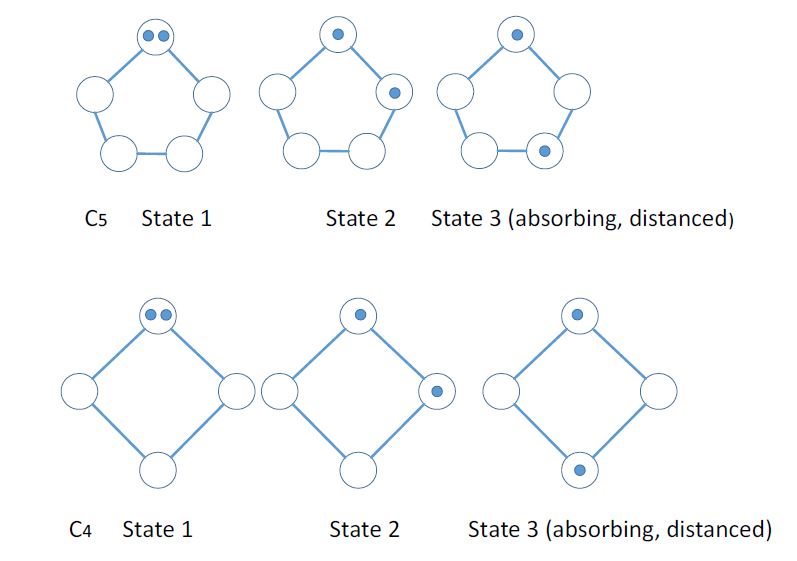}
    \caption{States for $C_5$ and $C_4$, $d=2$}
    \label{fig:my_label}
\end{figure}
To see the difference between $n\geq5$ and $n=4,$ consider the (expected)
absorption time $T$ from state $2$ when adopting a random walk (a LRW with
$p=0$). In $C_{n},$ $n\geq5,$ when both agents move from state $2,$ if they go
in the same direction (probability $1/2)$ or towards each other (probability
$1/4)$, they stay in state 2. If they go in opposite directions (probability
$1/4)$ they reach the absorbing state 3. So $T$ satisfies the equation%
\[
T=\left(  3/4\right)  \left(  1+T\right)  +\left(  1/4\right)  \left(
1\right)  \Longrightarrow\text{ }T=4.
\]
However in the graph $C_{4},$ if they start in state 2, they stay forever in
state $2,$ so $T=\infty.$ In the following two subsections on $n\geq5$ and
$n=4,$ we consider Population Dependent Lazy Random Walks, using the notation
$p_{1}=p$ (used when alone at a node - in state 2) and $p_{2}=r.$ We set
$q=1-p$ and $s=1-r$ for the complementary probabilities. We solve this problem
and then the simpler LRW problem by setting $p_{1}=p_{2}$ ( $p=r $).

\subsection{The case of $C_{n},$ $n\geq5$}

As illustrated in Section 2, we only need to calculate the transition
probabilities between the non-absorbing states. Here these are $1$ and $2.$
This transition matrix is given by%
\begin{equation}
B=\left(
\begin{array}
[c]{cc}%
r^{2}+s^{2}/2 & 2rs\\
pq & p^{2}+\left(  3/4\right)  q^{2}%
\end{array}
\right)  ,\text{ } \label{B}%
\end{equation}
and the fundamental matrix by,%
\begin{equation}
F=\left(  I_{2}-B\right)  ^{-1}=F=\frac{1}{\alpha}\left(
\begin{array}
[c]{cc}%
\frac{2\left(  1+7p\right)  }{1-r} & \frac{16r}{1-p}\\
\frac{8p}{1-r} & \frac{4\left(  1+3r\right)  }{1-p}%
\end{array}
\right)  ,\text{where} \label{F1}%
\end{equation}%
\[
\alpha=\left(  1+3r+p(7+5r)\right)
\]
$\allowbreak\allowbreak$

$\allowbreak$The absorption times from states $i=1,2$ are%
\[
\left(
\begin{array}
[c]{c}%
t_{1}\left(  p,r\right) \\
t_{2}\left(  p,r\right)
\end{array}
\right)  =F\left(
\begin{array}
[c]{c}%
1\\
1
\end{array}
\right)  =\frac{1}{\alpha}\left(
\begin{array}
[c]{c}%
\frac{14p+2}{\left(  1-r\right)  }+16\frac{r}{\left(  1-p\right)  }\\
\frac{12r+4}{\left(  1-p\right)  }+8\frac{p}{\left(  1-r\right)  }%
\end{array}
\right)
\]
$\allowbreak$ Since we are starting in state $2$ we minimize $t_{2}\left(
r,p\right)  $ at
\begin{align*}
\bar{p}_{2}  &  \equiv\bar{r}=0,~\bar{p}_{1}=\bar{p}=\left(  \sqrt
{33}-5\right)  /2\simeq\allowbreak0.372\,28,\\
~\bar{t}_{2}  &  \equiv t_{2}\left(  \bar{p}_{2},\bar{r}_{2}\right)  =\left(
\sqrt{33}+15\right)  /8\simeq\allowbreak2.\,\allowbreak593\,1
\end{align*}
So when acting optimally, the agents always move when they are with another
agent, and move about 63\% of the time when they are alone. The absorption
time of about $2.6$ periods is considerably less than the $4$ periods they
take if they both follow a random walk.

If the agents must move according to a common LRW because they are unaware of
the local population, we seek a minimum absorption time subject to $r=p.$
\[
t_{2}\left(  p,p\right)  =\frac{20p+4}{\left(  1-p\right)  \left(
10p+5p^{2}+1\right)  },\text{ }%
\]
with minimum of $t_{2}\left(  1/5,1/5\right)  =\allowbreak25/8\simeq
\allowbreak3.\,\allowbreak125$ at $p=1/5.$ So even without being aware of the
population of their location, they can still do a bit better than the random
walk ($p=0$) absorption time of $4$. Similar results can be obtained for
starting together (state 1) or starting randomly.

\subsection{The case of $C_{4}$}

On the graph $C_{4},$ the transition matrix changes in the transition
probability from state 2 to state 2 because if the agents move away from each
other the state remains state 2. The transitions among the non-absorbing
states are now
\[
B=\left(
\begin{array}
[c]{cc}%
r^{2}+s^{2}/2 & 2rs\\
pq & p^{2}+q^{2}%
\end{array}
\right)
\]

A similar analysis to that for $n\geq5$ now shows that starting from either
state 1 or state 2, the optimal strategies are $p_{2}=r=0$ and $p_{1}=p=1/2.$
Assuming this, we have $\bar{t}_{1}=2$ and $\bar{t}_{2}=3.$ This is
counter-intuitive in that it is quicker to socially distance starting with
both agents at the same node than starting with them at adjacent nodes. If we
seek the optimal LRW, the solution depends on where we start. If we start at
state 2 (two at a node), then it turns out that the random walk ($p=0)$ is
optimal, with (as shown above) an absorption time of 4. We already know that a
random walk starting at state 2 will never achieve social distancing, as in
this case state 2 will never be left. In this case the optimal $p$ is $\left(
1/10\right)  (-1+(49-20\sqrt{6})^{1/3}+(49+20\sqrt{6})^{1/3})=\allowbreak
0.382\,72.$ The absorption time for this LRW is approximately $4.45$.

Although this example is very simple, with only two agents, it illustrates the
use of population dependent strategies. That is, letting agents have awareness
of their immediate environment. It also shows why random walks, which maximize
the speed of the agents, do not necessarily lead to the quickest dispersal times.

\section{ Three Agents on the Cycle Graph $C_{3}$}

Two important classes of graphs are the cycle graphs $C_{n}$ and the complete
graphs $K_{n},$ which coincide for $n=3$ nodes. Due to the symmetry of the
graph we can use a special notation for states, rather than the more general
one defined earlier in Section 2. The problem is small enough for us to obtain
exact solutions, whereas the larger graphs will be studied later using
simulation. For the first two results, on dispersion (social distancing) and
gathering (multiple rendezvous) of three agents on $C_{3},$ we define three
states $j=1,2,3$ as those where the agents lie on $j$ distinct nodes. The
third result, on the search game, will require a different notion of states.

\subsection{Social distancing on $C_{3}$}

We first consider how three agents placed on the nodes of $C_{3}$ can achieve
social distancing with $D=1.$ This means that all pairwise distances must be
at least $1,$ that is, the agents must occupy distinct nodes. This is also
called the dispersion problem (one agent at each node). It turns out,
surprisingly, that the initial placement of the agents does not affect the
optimal strategy, which is the loop-random walk.

\begin{proposition}
If three agents are placed in any way on the nodes of $C_{3},$ then the
expected time to the social distanced state $j=3$ (one on each node) is
uniquely minimized when the agents adopt the loop-random walk $\left(
p=1/3\right)  $.
\end{proposition}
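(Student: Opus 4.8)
The plan is to reduce the problem to a two-state absorbing Markov chain and then show directly that the expected absorption time, from either non-absorbing state, is a rational function of $p$ minimised at $p=1/3$.

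First I would exploit the rotational symmetry of $C_3$: the dynamics depend only on the coarse state $j\in\{1,2,3\}$ recording how many distinct nodes are occupied, with $j=3$ absorbing. So only the $2\times2$ transition matrix $B(p)$ on $\{1,2\}$ is needed. To compute its entries I would fix one representative configuration in state $1$ and one in state $2$ and enumerate the $3^3=27$ destination triples --- each agent staying at its current node with probability $p$ and moving to each of its two neighbours with probability $q/2$, where $q=1-p$ --- grouping triples by the resulting coarse state. This yields cubic polynomials, e.g.\ $b_{11}=p^3+q^3/4$ and $b_{13}=3pq^2/2$ (so $b_{12}=1-b_{11}-b_{13}$), and similarly for the second row. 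Solving $(I_2-B)\,t=J_2$, equivalently the simultaneous equations $t_j=1+b_{j1}t_1+b_{j2}t_2$ of Section 2, gives $t_1(p)$ and $t_2(p)$ explicitly as ratios of cubics on $[0,1)$, with $t_j(p)\to\infty$ as $p\to1^-$.

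There is a structural reason the loop-random walk should be optimal, which I would use as an organising principle and sanity check: when $p=q/2$, i.e.\ $p=1/3$, every agent moves to a uniformly random node independently of its current one, so after a single step the configuration is a fresh i.i.d.\ uniform triple regardless of the starting state; this is a permutation (hence absorbing) with probability $6/27=2/9$, so $t_1(1/3)=t_2(1/3)=9/2$. To finish, I would verify by direct computation that $t_j(p)-9/2=(3p-1)^2\,g_j(p)/h_j(p)$ for each $j\in\{1,2\}$, with $g_j$ and $h_j$ polynomials that are strictly positive on $[0,1)$; combined with $t_j(p)\to+\infty$ as $p\to1^-$, this shows $p=1/3$ is the unique minimiser of $t_j$ for both initial placements $j=1$ and $j=2$.

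The main obstacle is entirely the bookkeeping and the final positivity check: one must tabulate the $27$ transitions without error (it is easy, for instance, to overlook that from state $1$ a pure random walk cannot reach state $3$ in one step, forcing $b_{13}=0$ at $p=0$), and then verify that after extracting the factor $(3p-1)^2$ the residual polynomials genuinely have no root in $[0,1)$ rather than concealing a second critical point. The rest --- solving a $2\times2$ linear system and simplifying --- is routine.
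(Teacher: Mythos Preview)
Your proposal is correct and follows the same overall architecture as the paper: reduce by symmetry to the two-state chain on $\{1,2\}$, compute the transition matrix $B(p)$, solve $(I_2-B)t=J_2$ for $t_1(p),t_2(p)$, and then show algebraically that $p=1/3$ is the unique minimiser. The paper finishes by differentiating, factoring $(p-1/3)$ out of each $t_j'(p)$, and checking the residual factor is positive on $(0,1)$; you instead subtract the minimum value and factor $(3p-1)^2$ out of $t_j(p)-9/2$, which is an equivalent but slightly cleaner manoeuvre since it yields the global inequality $t_j(p)\ge 9/2$ directly without a separate monotonicity argument. Your structural observation---that at $p=1/3$ each agent's next position is uniform on $C_3$ independently of its current one, so the process regenerates as an i.i.d.\ uniform triple and hence $t_1(1/3)=t_2(1/3)=9/2$ with absorption probability $6/27$ per step---does not appear in the paper and is a genuine conceptual bonus: it explains \emph{why} the loop-random walk is special and gives the minimum value for free before any linear algebra. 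For the record, the factorisations you anticipate do work out exactly: $t_1(p)-9/2=3(3p-1)^2/\bigl(2(3p+1)(1-p)\bigr)$ and $t_2(p)-9/2=(3p-1)^2(27p^2+1)/\bigl(6(1-p)(3p+1)(3p^2+1)\bigr)$, so the positivity check is immediate.
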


\begin{proof}
If all agents adopt the same laziness $p$ (speed $q=1-p),$ the transition
matrix for the non-absorbing states $1$ (all at same node) and $2$ (two at one
node, one at another) is given by%
\[
B=\left(
\begin{array}
[c]{cc}%
p^{3}+q^{3}/4 & 3p^{2}q+3pq^{2}/2+3q^{3}/4\\
p^{2}q/2+pq^{2}/4+q^{3}/8 & p^{3}+3p^{2}q/2+9pq^{2}/4+5q^{3}/8
\end{array}
\right)  .
\]
Using the fundamental matrix $F=\left(  I-B\right)  ^{-1},$ with $I$ the
identity matrix of size $2,$ we obtain the expected times $t_{j}$ from state
$j$ to the absorbing state $3$ as%
\[
\left(
\begin{array}
[c]{c}%
t_{1}\left(  p\right) \\
t_{2}\left(  p\right)
\end{array}
\right)  =\left(  I-B\right)  ^{-1}\left(
\begin{array}
[c]{c}%
1\\
1
\end{array}
\right)  =\left(
\begin{array}
[c]{c}%
\frac{6}{1+2p-3p^{2}}\\
\frac{2(7+12p+9p^{2})}{3+6p+18p^{3}-27p^{4}}%
\end{array}
\right)  .
\]
$\allowbreak\allowbreak$ To minimize $t_{1},$ we calculate
\[
t_{1}^{\prime}\left(  p\right)  =\left(  p-1/3\right)  \frac{36}{\left(
3p+1\right)  ^{2}\left(  p-1\right)  ^{2}},
\]
to observe that $t_{1}$ is decreasing for $p<1/3$ and increasing for $p>1/3$
and hence has a unique minimum at $p=1/3.$ Similarly the time to the absorbing
state from state $2$ is given by
\[
t_{2}\left(  p\right)  =\frac{2(7+12p+9p^{2})}{3+6p+18p^{3}-27p^{4}}.
\]
By calculating%
\begin{align*}
t_{2}^{\prime}\left(  p\right)   &  =\left(  p-1/3\right)  \frac{12}{3}%
\frac{1-6p+36p^{2}+54p^{3}+27p^{4}}{\left(  p-1\right)  ^{2}\left(
3p+1\right)  ^{2}\left(  3p^{2}+1\right)  ^{2}}\\
&  =\left(  p-1/3\right)  \left[  \frac{12}{3}\frac{\left(  1-3p\right)
^{2}+27p^{2}+54p^{3}+27p^{4}}{\left(  p-1\right)  ^{2}\left(  3p+1\right)
^{2}\left(  3p^{2}+1\right)  ^{2}}\right]  ,
\end{align*}
and observing that the bracketed expression is positive on $\left(
0,1\right)  ,$ we see as above that $t_{2}$ has a unique minimum at $p=1/3.$
$\allowbreak$ Since $3=1+\delta,$ where $\delta=2$ is the degree of (every
node of) $C_{3},$ we see that this is the loop-random walk.
\end{proof}

\subsection{Gathering (multi-rendezvous) on $C_{3}$}

The Rendezvous Problem (Alpern, 1995), asks how two mobile agents who do not
know the location of the other, can meet in least expected time, called the
\textit{Rendezvous Value} of the problem. We now a multiple agent version of
that problem. Consider the \textit{gathering}, or \textit{multiple sticky
rendezvous problem}, where agents who meet merge into a single agent and the
aim is to have all agents at the same node. We consider the symmetric version
of the problem, where all agents must adopt the same strategy. In the present
context this means they all adopt the same laziness $p$ in their LRW. This has
previously been considered (see Section 5 of Alpern (1995) ) only for simple
two-agent rendezvous. Here the absorbing state is state 1, where the agents
together occupy 1 node. The sticky version for multiple agents was studied for
agents on a line graph, in Baston (1999). Again, our result is surprising in
that the initial placement of the agents on $C_{3}$ does not affect the solution.

\begin{proposition}
If three agents are placed in any way on $C_{3}$ then the unique solution to
the gathering problem is the loop-random walk, $p=1/3$ in this case. The
Rendezvous Value for the problem starting from state $2$ is $3$ and from state
3 is $27/7.$
\end{proposition}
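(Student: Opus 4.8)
The plan is to mirror the structure of the social-distancing computation from Proposition (Section 4.1), but with the absorbing state changed from $j=3$ to $j=1$. First I would set up the transition matrix $B$ on the non-absorbing states, which now are state $2$ (two agents at one node, one at another) and state $3$ (all three agents on distinct nodes). Because agents who meet coalesce (the sticky assumption), from state $2$ the merged pair of agents moves together as a single agent while the lone agent moves independently; from state $3$ all three move independently. I would enumerate the joint moves: in state $3$, each agent either stays (prob $p$) or moves to one of its two neighbours (prob $q/2$ each) on $C_3$, and I would tally the probabilities of landing in state $1$, state $2$, or remaining in state $3$. Similarly in state $2$, the coalesced agent and the singleton each stay or move, and I would compute the chance they collide (reaching state $1$) versus staying in state $2$ (note state $3$ is unreachable from state $2$ because two of the three original agents are permanently stuck together). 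This gives a $2\times 2$ matrix $B(p)$.

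Next I would form the fundamental matrix $F = (I - B)^{-1}$ and extract the absorption-time vector $(t_2(p), t_3(p))^{\top} = F\,(1,1)^{\top}$, exactly as in the earlier proposition. Then, to establish that $p = 1/3$ is the unique minimiser for \emph{both} starting states, I would differentiate $t_2(p)$ and $t_3(p)$ and show each derivative has the form $(p - 1/3)\cdot g(p)$ where $g(p) > 0$ on $(0,1)$ — the same trick used in Section 4.1, where the awkward quartic in the numerator was rewritten as a sum of a square $(1-3p)^2$ and manifestly positive terms. I expect the algebra to factor cleanly with a root at $p=1/3$ precisely because $1/3 = 1/(1+\delta)$ with $\delta = 2$, so the loop-random walk is the natural symmetric optimum. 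Finally, evaluating at $p = 1/3$ gives the claimed Rendezvous Values: $t_2(1/3) = 3$ and $t_3(1/3) = 27/7$.

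The main obstacle will be getting the state-$2$ transition probabilities right under the sticky rule — one must be careful that the coalesced pair behaves as one agent (so it has only the two moves stay/left/right with the \emph{same} $q/2$ split, not an averaged behaviour of two independent walkers), and that from state $2$ one can never return to state $3$. A secondary nuisance is the factorisation showing the bracketed factor in $t_3'(p)$ is positive on $(0,1)$; as in the $C_3$ dispersion case this should reduce to writing the relevant polynomial as $(1-3p)^2$ plus positive monomials, but the degree may be higher here, so I would be prepared to verify positivity by grouping terms or by checking the polynomial has no real roots in $(0,1)$. Once these two points are handled, the uniqueness-of-minimiser argument and the numerical evaluation are routine.
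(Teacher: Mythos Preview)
Your plan is correct and matches the paper's proof essentially step for step: the paper also sets up the $2\times 2$ transition matrix on states $2$ and $3$ (with the $(2,3)$ entry equal to $0$ by stickiness), inverts $I-B$ to get $t_2(p)=4/(-3p^2+2p+1)$ and $t_3(p)=4(4+3p+9p^2)/\bigl(3(1+3p+p^2+p^3-6p^4)\bigr)$, and then factors each derivative as $(3p-1)$ times a positive expression. The only point where reality is kinder than you anticipate is the positivity check for $t_3'$: the residual polynomial has all positive coefficients, so no clever regrouping is needed.
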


\begin{proof}
If, as required, all agents adopt the same laziness $p$ (speed $q=1-p),$ the
transition matrix for the non-absorbing states $2$ (two at one node, one at
another) and $3$ (all at different nodes) is given by%
\[
B=\left(
\begin{array}
[c]{cc}%
p^{2}+p(1-p)+\frac{3(1-p)^{2}}{4} & 0\\
3p^{2}(1-p)+\frac{3p(1-p)^{2}}{2}+\frac{3(1-p)^{3}}{4} & p^{3}+\frac
{3p(1-p)^{2}}{4}+\frac{(1-p)^{3}}{4}%
\end{array}
\right)  .
\]
So by the general formulae (\ref{t}) and (\ref{F1}), the fundamental matrix is
given by%
\[
F=\left(
\begin{array}
[c]{cc}%
\frac{4}{1+2p-3p^{2}} & 0\\
\frac{4+12p^{2}}{1+3p+p^{2}+p^{3}-6p^{4}} & \frac{4}{3+3p^{2}-6p^{3}}%
\end{array}
\right)  ,\text{ with }t=\left(  t_{2},t_{3}\right)  \text{ given by}%
\]%
\[
\left(
\begin{array}
[c]{cc}%
\frac{4}{1+2p-3p^{2}} & 0\\
\frac{4+12p^{2}}{1+3p+p^{2}+p^{3}-6p^{4}} & \frac{4}{3+3p^{2}-6p^{3}}%
\end{array}
\right)  \left(
\begin{array}
[c]{c}%
1\\
1
\end{array}
\right)  =\left(  \frac{4}{-3p^{2}+2p+1},\frac{4\left(  4+3p+9p^{2}\right)
}{3\left(  1+3p+p^{2}+p^{3}-6p^{4}\right)  }\right)
\]
It is clear that $t_{2}$ is minimized where the denominator is maximized,
where $-6p+2=0,$ $p=1/3,$ with $t_{2}\left(  1/3\right)  =3.$ Similarly
$t_{3}$ is minimized when
\begin{align*}
d\left(  p\right)   &  =\left(  \frac{4\left(  4+3p+9p^{2}\right)  }{3\left(
1+3p+p^{2}+p^{3}-6p^{4}\right)  }\right)  ^{\prime}\\
&  =\frac{4\left(  3p-1\right)  \left(  17p+39p^{2}+27p^{3}+36p^{4}+9\right)
}{3\left(  -6p^{4}+p^{3}+p^{2}+3p+1\right)  ^{2}}=0,\text{ giving}\\
p  &  =1/3\text{ and }t_{3}\left(  1/3\right)  =\frac{27}{7}%
\end{align*}

As $3=1+2=1+\Delta$, where $\Delta=2$ is the degree of all nodes of $C_{3}, $
the LRW with $p=1/3$ is the loop-random walk.
\end{proof}

In this and larger gathering problems, every state has some number $k$ of
occupied nodes, those any any such nodes being considered glued together and a
single new agent. Note that the set of states $S_{K}$ with $k\leq K$ for some
$K$ is an invariant, or absorbing \textit{set}. This means that we can find
expressions for those $t_{i}\left(  p\right)  $ for $i$ in $S_{2}$ first, then
use this to find $t_{i}\left(  p\right)  $ for $i$ in $S_{3},$ and so on. This
is just a matter of a particular way of solving the simultaneous equations in
(\ref{simul}) in a recursive way. For example in the two state problem of this
section, we first solve for $t_{2}$ in $t_{2}=b_{2,2}\left(  1+t_{2}\right)  $
and then for $t_{3}$ in $t_{3}=b_{3,2}\left(  1+t_{2}\right)  +b_{3,3}\left(
1+t_{3}\right)  ,$ where the rows are considered row 2 and row 3. We consider
this as recursively solving for the variables in the simultaneous equations
rather than as dynamic programming because we cannot optimize the values of
$p$ for small $K$ and then use these values for larger $K.$ We are allowed
only a single value of $p$ and in addition agents do not themselves know the
current value of $k.$

However there is a variation of the gathering problem on $C_{3}$ which could
be solved with dynamic programming, as suggested by an anonymous referee. In
the current model, when two agents meet, the remaining agent is unaware of
this and hence must continue with an unchanged strategy $p,$ so he would not
be aware he was in a solved case. Suppose we consider a \textit{different}
model in which a central controller sends out a signal to all agents telling
how many new agents $k$ there now are, considering gluing of those who have
met. For $C_{3}$ the distribution of agents on $C_{3}$ (the state) is
determined by $k.$ \ Suppose we let the agents choose a common value of $p$
that depends on $k,$ call it $_{k}p$ . In that case, we could first minimize
$T_{2}$ for some \ $_{2}\bar{p}$\ and then solve the $k=3$ problem by using
\ $_{2}\bar{p}$ when two agents meet. However even with this intervention
approach we could not solve the general gathering problem of $m$ agents
randomly placed on $C_{n}$ because after two meet the $m-1$ new agents would
not be randomly placed. (We also note that for the particular case of three
agents on $C_{3}$ the new problem with added information does not lead to a
different answer, as all the optimal values of $p$ are the same, 1/3. But it
would be a different method.)

\subsection{Two searcher team and one hider on $C_{3}$}

We now consider a search game played by two mobile searchers and a mobile
hider on $C_{3}.$ These games were introduced by Isaacs (1965) and studied
initially by Zelikin (1972), Alpern (1974) and Gal (1979). For a comprehensive
treatment, see Gal (1980) and Alpern and Gal (2003). We place the three agents
on $C_{3}$ randomly. The searchers choose a common laziness $s$ and the hider
chooses a laziness $h.$ In this instance, we take the point that the searchers
are a team, mother and father to a hungry infant. They have the common aim of
minimizing the time $T$ taken to find the hider, who wants to maximize $T.$
Here $T$ is the first time that one of the searchers finds the hider, it does
not matter which searcher it is. (We could introduce competition between the
searchers, but we shall not do so here.) It is not clear \textit{a priori}
that there will be a saddle point. However in the event, we show that there is
one, with $h$ about $.51$ and $s$ about $.28$. Thus the searcher moves more
frequently than the hider. Ruckle (1983) has considered this problem on
$C_{n}$ (cycle graph with $n$ nodes) when there is a single searcher and a
single hider.

There are four states (up to symmetry, as usual): States 1 and 2 are non
absorbing (hider is not caught), states 3 and 4 are absorbing (the hider has
been caught). See Figure 2. A random initial placement results in these states
occurring with respective probabilities $2/9,$ $2/9,$ $4/9$ and $1/9.$%
\begin{figure}[H]
    \centering
    \includegraphics[width=0.88\textwidth]{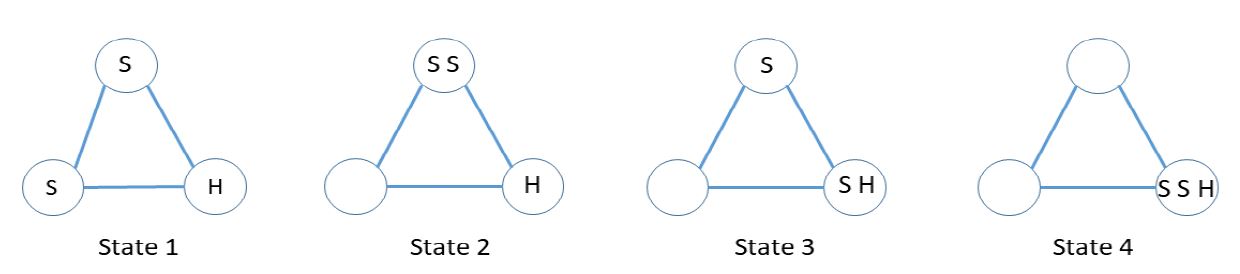}
    \caption{Four states of search game.}
    \label{fig:my_label}
\end{figure}
To calculate the expected value of the capture time $t_{j}$ (number of periods
to absorption) from state $j=1,2,$ it is only necessary to know the transition
probabilities between these two states, which are given by the following
$2\times2$ matrix $B$ (where $h^{\prime}=1-h$ and $s^{\prime}=1-s$).%
\begin{align*}
B  &  =\left(
\begin{array}
[c]{cc}%
\frac{1}{4}h^{\prime}s^{\prime2}+\frac{1}{4}hs^{\prime2}+\frac{1}{2}h^{\prime
}ss^{\prime}+hs^{2} & \frac{1}{4}h^{\prime}s^{\prime2}+\frac{1}{4}h^{\prime
}ss^{\prime}+hss^{\prime}\\
\frac{1}{4}h^{\prime}s^{\prime2}+\frac{1}{4}hs^{\prime2}+\frac{1}{2}h^{\prime
}ss^{\prime}+hss^{\prime} & \frac{1}{2}h^{\prime}s^{\prime2}+\frac{1}%
{4}hs^{\prime2}+\frac{1}{2}h^{\prime}s^{2}+hs^{2}%
\end{array}
\right) \\
&  =\frac{1}{4}\left(
\begin{array}
[c]{cc}%
1-s^{2}-2hs+6hs^{2} & 1-h-s+5hs-4hs^{2}\\
1+2hs-s^{2}-2hs^{2} & 2-h-4s+2hs+4s^{2}+hs^{2}%
\end{array}
\right)  ,\text{ with fundamental matrix}\\
F  &  =\left(  I-B\right)  ^{-1}=\frac{4}{E}\left(
\begin{array}
[c]{cc}%
-2-h-4s+2hs+4s^{2}+hs^{2} & -1+h+s-5hs+4hs^{2}\\
-1-2hs+s^{2}+2hs^{2} & -3-2hs-s^{2}+6hs^{2}%
\end{array}
\right)  ,\text{ with }E=
\end{align*}%
\[
-5-4h-13s+9hs-4h^{2}s+9s^{2}-hs^{2}+22h^{2}s^{2}-3s^{3}+31hs^{3}-28h^{2}%
s^{3}+4s^{4}-19hs^{4}+2h^{2}s^{4}%
\]
As in previous analyses, we then get the absorption times as%
\begin{align*}
\left(
\begin{array}
[c]{c}%
t_{1}\\
t_{2}%
\end{array}
\right)   &  =F\left(
\begin{array}
[c]{c}%
1\\
1
\end{array}
\right)  ,\text{ and expected meeting time}\\
T\left(  h,s\right)   &  =\left(
\begin{array}
[c]{cc}%
2/9 & 2/9
\end{array}
\right)  \left(
\begin{array}
[c]{c}%
t_{1}\\
t_{2}%
\end{array}
\right)  =\frac{a}{b},\text{ where }a\text{ is}%
\end{align*}%
\[
{\small 8}\left(  -7-3s-7hs+4s^{2}+13hs^{2}\right)  \text{ and }b\text{ is}%
\]%
\begin{gather*}
{\small -45-36h-117s+81hs-36h}^{2}{\small s+81s}^{2}{\small -9hs}%
^{2}{\small +198h}^{2}{\small s}^{2}\\
{\small -27s}^{3}{\small +279hs}^{3}{\small -252h}^{2}{\small s}%
^{3}{\small +36s}^{4}{\small -171hs}^{4}{\small +18h}^{2}{\small s}^{4}%
\end{gather*}
To determine whether (and where) $T\left(  h,s\right)  $ has a saddle point,
we first find the critical points by solving the simultaneous equations
\[
T_{h}=T_{s}=0,\text{ where }T_{s}=\partial T/\partial s\text{ and }%
T_{h}=\partial T/\partial h.
\]
We can see that a solution exists by plotting the two curves in Figure 3. Note
that the curve $T_{h}=0$ appears to be close to a straight line.
    \begin{figure}[H]
     \centering
     \includegraphics[width=0.9\textwidth]{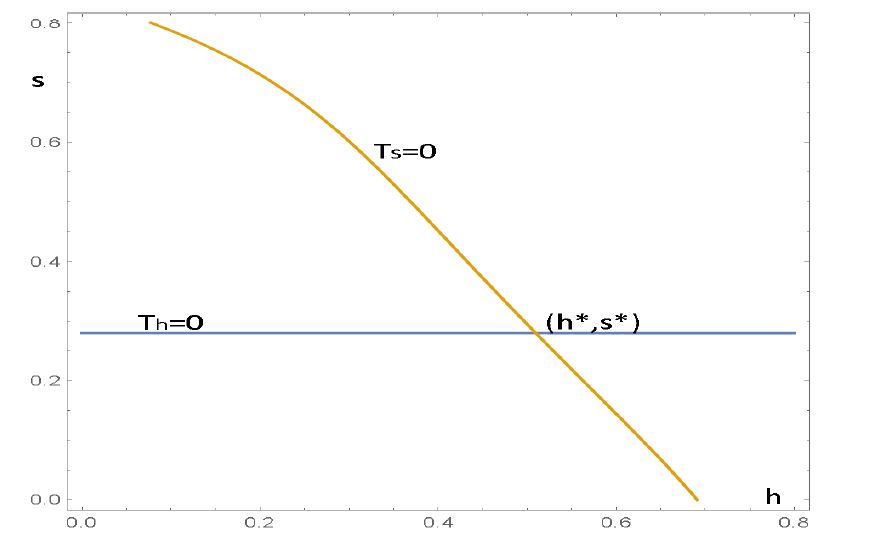}
     \caption{Intersection of $T_s=\partial T/\partial s$ and $T_h=\partial T/\partial s$ at $\left(  h^\ast,s^\ast\right)$.}
     \label{fig:my_label}
    \end{figure}
It is also useful to plot the optimal response curve $h=R\left(  s\right)$
of the hider for the function $T.$ We then can obtain $s^{\ast}$ exactly as
the solution to the fifth degree polynomial equation $T\left(  s,0\right)
=T\left(  s,1\right)  $ which simplifies to $14-15s-117s^{2}-33s^{3}%
-5s^{4}+60s^{5}=0$ and has a unique solution for $s\in\left[  0,1\right]  .$
    \begin{figure}[H]
     \centering
     \includegraphics[width=0.9\textwidth]{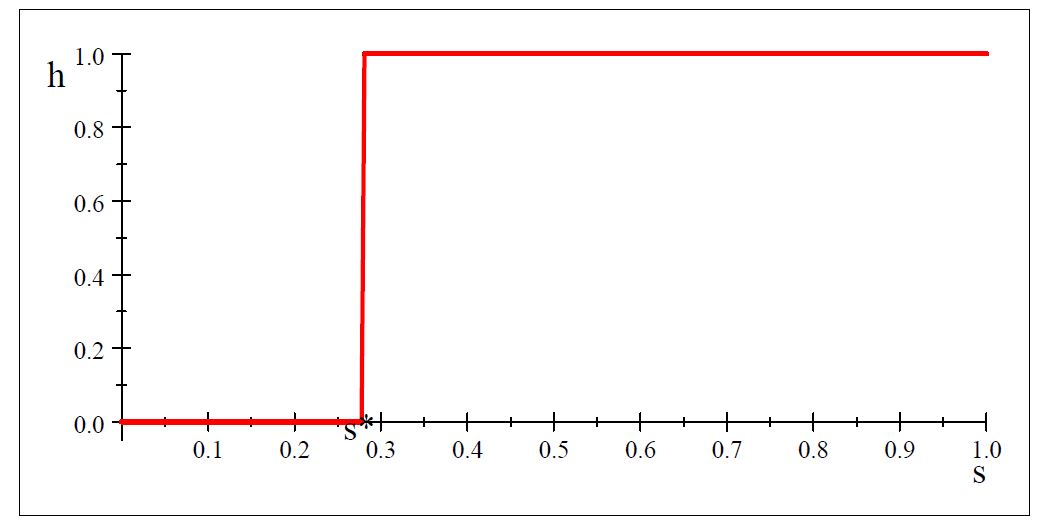}
     \caption{Hider's optimal response to s}
     \label{fig:my_label}
    \end{figure}
Numerical approximation of the critical point $\left(  h^{\ast},s^{\ast
}\right)  $ gives $h^{\ast}\simeq0.5097$ and $s^{\ast}\simeq0.2797,$ with game
value $V=T\left(  h^{\ast},s^{\ast}\right)  \simeq0.8390.$ To show that it is
a saddle point we approximate the determinant of the Hessian at about $-2.4,$
so it is certainly negative. But this fact is clearer from the Figure 5, which
shows plots where the horizontal axis can be $h$ or $s.$ The top (blue) curve
shows that the payoff $T$ is at least $V$ for any value of $s=x$ when the
hider adopts $h^{\ast}$ and is above $V$ if $s=x$ is not the optimal value
$s^{\ast}.$ The bottom (brown) curve shows that the searcher finds the hider
in time no more than $V$ when adopting $s^{\ast}.$ In this case the capture
time is not very sensitive to the value of $h.$%
    \begin{figure}[H]
     \centering
     \includegraphics[width=0.85\textwidth]{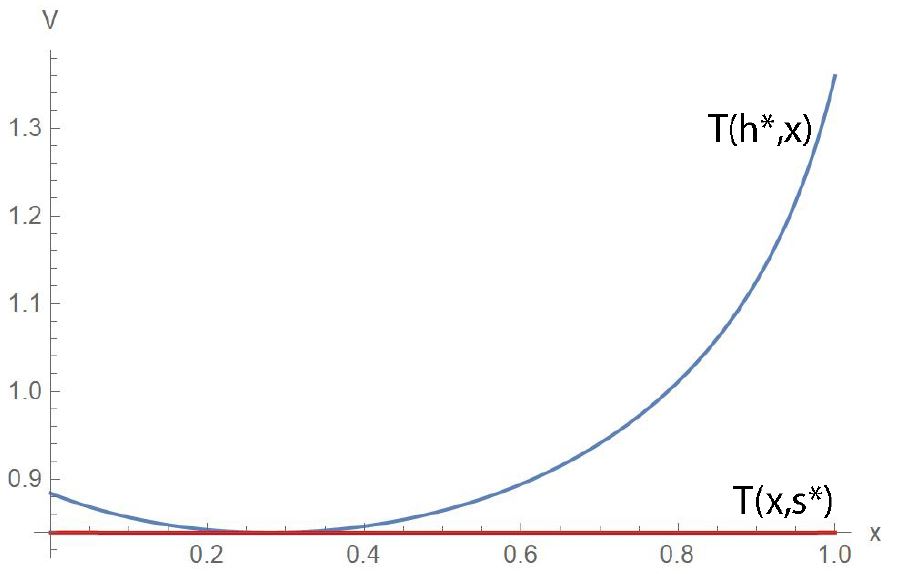}
     \caption{The pair $\left(  h^\ast,s^\ast\right)  $ is a saddle point of this
zero sum game.}
     \label{fig:my_label}
    \end{figure}

This analysis considers the two searchers as a team which wishes to minimize
the capture time $T.$ Perhaps a male and female who will bring the captured
prey back to their offspring, and it doesn't matter which one makes the kill.
A different approach (Payoff function) could model a competition between the
two searchers, as carried out in the next section.

\subsection{Competitive Search}

We now model the problem of two searchers and one hider as a three person
game, rather than considering the two searchers as a single player (team). As
in the previous subsection, the game ends at the first time $T$ when one or
both searchers coincide with the hider. The hider's payoff is simply $T.$ A
searcher gets payoff $1$ if he is the unique player to find the hider; $1/2 $
if both searchers find the hider at the same time and $0$ if the other
searcher finds the hider alone. This element of competition between the
searchers has been studied in Nakai (1986) and Duvocelle (2020), but here the
hider is also adversarial. Figure 6 shows the five states. States 1 and 2 are
non-absorbing, States 3, 4 and 5 are absorbing. Searcher 2 wins in State 3,
searcher 1 wins in State 4 and State 5 is a tie. The hider's payoff depends on
the time $T$ to reach an absorbing state.%
    \begin{figure}[H]
     \centering
     \includegraphics[width=0.95\textwidth]{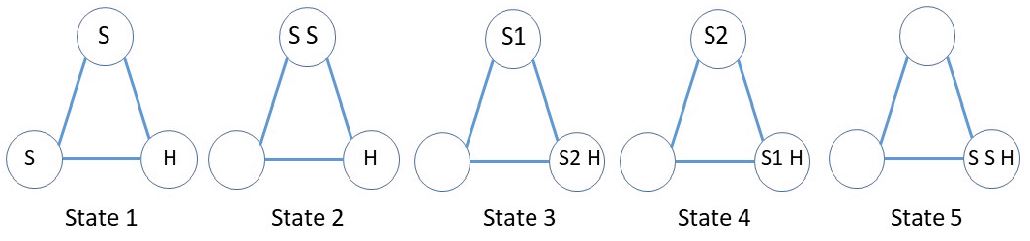}
     \caption{Five states in the competitive search game.}
     \label{fig:my_label}
    \end{figure}
We seek a Nash equilibrium that is symmetric with respect to the two
searchers. Denote the laziness of searcher 1 by $r$, searcher 2 by $s$ and of
the hider by $h.$ Let $E=E\left(  s_{1},s_{2},h\right)  $ now denote the
expected absorption time starting from a random state, respectively. Let
$a_{j},$ $j=3,4,5,$ denote the probability the game ends in State $j,$
assuming it starts randomly. Player 2's payoff is equal to $a_{3}+\left(
1/2\right)  ~a_{5},$ with a similar payoff for Player 1. We seek parameters $r
$ and $h$ such that

\begin{itemize}
\item $T\left(  r,r,h\right)  \geq T\left(  r,r,h^{\prime}\right)  $ for any
hider parameter $h^{\prime},$ and

\item $a_{3}\left(  r,r,h\right)  +\left(  1/2\right)  a_{5}\left(
r,r,h\right)  \geq a_{3}\left(  r,s,h\right)  +\left(  1/2\right)
a_{5}\left(  r,s,h\right)  ,$ for any $s.$
\end{itemize}

The probabilities that an absorbing Markov chain ends at each absorbing state
are easily calculated but we use a qualitative idea to avoid this calculation
on a five state chain. Instead we show that there is a dominating search
strategy (depending on $h$ but not the other search strategy) that ensures
always capturing in the next period with maximum probability. Such a strategy
clearly maximizes the searcher's payoff, regardless of what the other searcher
is doing.

To calculate the optimal response of the hider to a symmetric pair $\left(
s,s\right)  $ of strategies of the searchers we refer to Figure 4.

Suppose the hider adopts strategy $h.$ If a searcher can always maximize the
probability of finding the hider in the current period, he guarantees doing at
least as well as the other searcher. What is the best value of $s$ to maximize
this probability? If $h=1$ (hider stays still), then probability of capture is
$\left(  1-s\right)  /2.$ If $h=0$ (moves) the probability is $s/2+\left(
1-s\right)  /4.$ So against a general $h,$ the capture probability in the next
period is%
\begin{equation}
W\left(  s,h\right)  =h\left(  \left(  1-s\right)  /2\right)  +\left(
1-h\right)  \left(  s/2+\left(  1-s\right)  /4\right)  =\allowbreak\left(
\frac{1}{4}-\frac{3}{4}h\right)  s+\left(  \frac{1}{4}h+\frac{1}{4}\right)  .
\label{W}%
\end{equation}
The maximizing $s$ will be $1$ if $\left(  \frac{1}{4}-\frac{3}{4}h\right)  $
is positive, i.e. $h<1/3.$ The maximizing $s$ will be $0$ if $h>1/3.$ If
$h=1/3$ (loop-random walk) then all $s$ give the same capture probability.
Note that $s^{\ast}=1/3$ gives the searcher a loop-random walk, as $C_{3}$ has
degree $2$ for all nodes. We already showed in the previous subsection that if
$s=s^{\ast}$ then all $h$ give the same expected capture time from a random
start. So $h=1/3$ and $s=s^{\ast}\simeq.278$ give the searcher-symmetric
equilibrium $\left(  s,s,h\right)  =\left(  s^{\ast},s^{\ast},1/3\right)  .$

To see that this equilibrium is unique, suppose $s<s^{\ast}.$ Then as $h$ is
an optimal response to $\left(  s,s,\_\right)  ,$ we have that $h=0.$ In this
case we have in particular that $h<1/3$ so we showed above that the play of
each searcher to maximize the probability he finds the hider first is $s=1.$
This contradicts our assumption that $s<s^{\ast}.$ Similarly if $s>s^{\ast},$
then the best response is $h=1>1/3,$ so the maximizing $s$ if $0,$
contradicting our assumption.

\begin{proposition}
The unique searcher-symmetric Nash equilibrium to the competitive search game
on $C_{3}$ is given by the loop-random walk ($h=1/3)$ for the hider and a
laziness $s^{\ast}$ for both searchers, where $s^{\ast}\simeq0.2791$ is the
unique solution to the fifth degree polynomial equation $14-15s-117s^{2}%
-33s^{3}-5s^{4}+60s^{5}=0$ between $0$ and $1.$
\end{proposition}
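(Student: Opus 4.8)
The plan is to check the two equilibrium inequalities listed above — the hider's condition $T\left(r,r,h\right)\geq T\left(r,r,h^{\prime}\right)$ for every $h^{\prime}$, and the searchers' competitive condition $a_{3}\left(r,r,h\right)+\tfrac12 a_{5}\left(r,r,h\right)\geq a_{3}\left(r,s,h\right)+\tfrac12 a_{5}\left(r,s,h\right)$ for every $s$ — for the candidate pair $\left(r,r,h\right)=\left(s^{\ast},s^{\ast},1/3\right)$, and then to rule out every other searcher-symmetric equilibrium. The searchers' half is handled by the myopic-domination idea already set up around equation (\ref{W}); the hider's half is handled by importing the equalizer property established in Section 4.3. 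First I would note that the Markov chain is absorbing (from any of States $1,2$ there is positive probability of capture next period), so the game ends almost surely and $a_{3}+a_{4}+a_{5}=1$; hence searcher $2$'s payoff $a_{3}+\tfrac12 a_{5}$ is at least $1/2$ precisely when $a_{3}\geq a_{4}$, i.e. when searcher $2$ is at least as likely to be the sole capturer as searcher $1$.

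For the searchers, the key structural fact is that, before capture, a single searcher is always (up to the symmetry of $C_{3}$) at graph distance $1$ from the hider, so that in each period his probability of landing on the hider's new node — an event that contains both the solo-capture and the tie events — equals the linear function $W\left(s,h\right)$ of (\ref{W}), independently of the history and of what the other searcher does. I would then argue, by comparing the two searchers period by period conditional on the game not yet having ended, that a searcher who in every period picks $s$ maximizing $W\left(\cdot,h\right)$ achieves $a_{3}\geq a_{4}$ against \emph{any} opponent strategy, hence guarantees himself payoff at least $1/2$; since the two searchers' credits sum to $1$, a symmetric pair of such strategies is an equilibrium against the given $h$. The maximizer of $W\left(\cdot,h\right)$ is $s=1$ when $h<1/3$, is $s=0$ when $h>1/3$, and is indifferent over all $s\in\left[0,1\right]$ when $h=1/3$ (where $W\equiv1/3$); in particular, when $h=1/3$ every $s$ — and so $s^{\ast}$ in particular — is a best response for each searcher, which supplies the searchers' half of the equilibrium.

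For the hider I would invoke the calculation of Section 4.3: when both searchers use the common laziness $s^{\ast}$, the unique root in $\left(0,1\right)$ of $14-15s-117s^{2}-33s^{3}-5s^{4}+60s^{5}=0$, the expected capture time $T\left(s^{\ast},s^{\ast},h\right)$ from a random start does not depend on $h$ — equivalently, $T\left(s^{\ast},\cdot\right)$ is the horizontal line through the two equal values $T\left(s^{\ast},0\right)=T\left(s^{\ast},1\right)$ that defined $s^{\ast}$. Consequently the hider is indifferent among all $h$, so $h=1/3$ is a best response to $\left(s^{\ast},s^{\ast}\right)$. Together with the previous paragraph, $\left(s^{\ast},s^{\ast},1/3\right)$ satisfies both equilibrium conditions.

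For uniqueness I would combine the hider's optimal-response curve $R\left(s\right)$ of Figure 4 with the myopic characterization. If a searcher-symmetric equilibrium had the searchers at some $s<s^{\ast}$, then since the hider best-responds we would get $h=R\left(s\right)=0<1/3$, forcing each searcher's best response to be $s=1$, which contradicts $s<s^{\ast}<1$; symmetrically $s>s^{\ast}$ forces $h=R\left(s\right)=1>1/3$ and searcher best response $s=0$, again a contradiction. Hence the searchers play $s^{\ast}$, and then $s^{\ast}$ is a searcher best response only if $h=1/3$, since for $h\neq1/3$ the (essentially unique) best response is $0$ or $1$. The step that needs the most care, and the main obstacle, is exactly this domination claim for the searchers: because the two searchers' capture events in a given period are correlated (both depend on the hider's move) and because of the tie credit, one must argue carefully — via the reduction to ``$a_{3}\geq a_{4}$'' together with a stepwise coupling comparison of the conditional next-period capture probabilities — that myopically maximizing $W$ truly dominates every alternative, and that when $h\neq1/3$ the myopic choice is moreover the \emph{unique} best response (so that the two contradictions above are genuine).
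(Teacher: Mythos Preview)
Your proposal follows essentially the same route as the paper's own argument: the myopic per-period capture probability $W(s,h)$ of (\ref{W}) to handle each searcher's best response, the indifference of the hider at $s=s^{\ast}$ imported from Section~4.3, and the same two-sided contradiction via the hider's response curve $R(s)$ of Figure~4 for uniqueness. The paper is terser---it simply asserts that a searcher who maximizes his one-step capture probability ``guarantees doing at least as well as the other searcher'' and that ``all $h$ give the same expected capture time'' at $s^{\ast}$---whereas you reformulate the searcher condition as $a_{3}\geq a_{4}$ and correctly flag the period-by-period domination (with correlated capture events and tie credit) as the step that actually needs work; but the underlying skeleton is the same.
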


To see why the team solution given in the previous subsection is not an
equilibrium with respect to the searchers, note that against $h=h^{\ast}%
\simeq.5097,$ a searcher playing $s=0$ (random walk) has a higher capture
probability in each period than one playing $s^{\ast}\simeq0.2791,$ as
$W\left(  0,.5097\right)  \simeq\allowbreak0.377\,4$ compared to $W\left(
s^{\ast},.5097\right)  \simeq\allowbreak0.340\,5,$ see (\ref{W}). Note that
the searchers behave the same at equilibrium whether or not they are working
as a team, but the hider moves more frequently when the hiders act as a team
rather than competitively.

\section{ Gathering and Dispersing on $C_{5},$ $m=3.$}

Suppose three agents are located on the cycle network $C_{n}.$ For this
section we take $n=5,$ but the following representation of states works for
all $n.$ We may use the symmetry of the network to reduce that state to three
numbers (actually $2,$ once we know $n).$ Let $j$ denote the distance between
the two closest agents and let $k$ denote the distance between the second
closest pair. Thus the arcs between the three agents have distances $j,$ $k$
and $n-j-k.$ For $n=5$ we have five states, as shown in Figure 7.%
Five states in the competitive search game.
  \begin{figure}[H]
     \centering
     \includegraphics[width=0.85\textwidth]{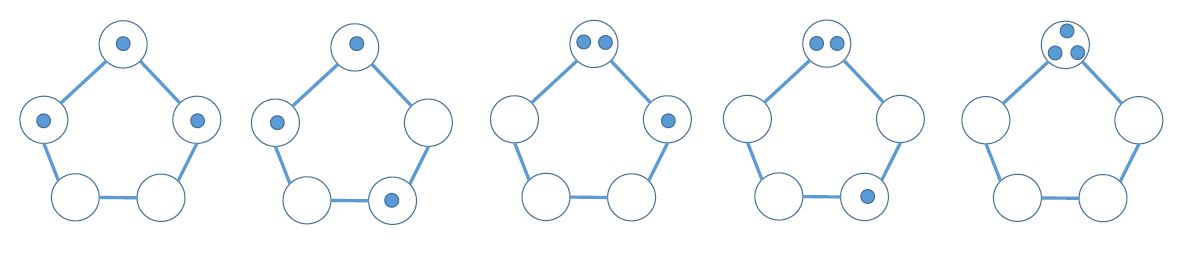}
     \caption{States $\left(  1,1\right)  ,\left(  1,2\right)  ,(0,1),(0,2),(0,0)$
left to right.}
     \label{fig:my_label}
    \end{figure}

In general, for three agents on $C_{n},$ we have a triangular set of states
$D_{3}=\left\{  \left(  j,k\right)  :0\leq j\leq k\leq\left(  n-j\right)
/2\right\}  .$ For the case $n=5$ considered here, the five states (in
$x=j,~y=k$ space) lie between the lines $j=k$ and $k\leq\left(  5-j\right)
/2, $ as shown as black disks in Figure 8.
    \begin{figure}[H]
     \centering
     \includegraphics[width=0.75\textwidth]{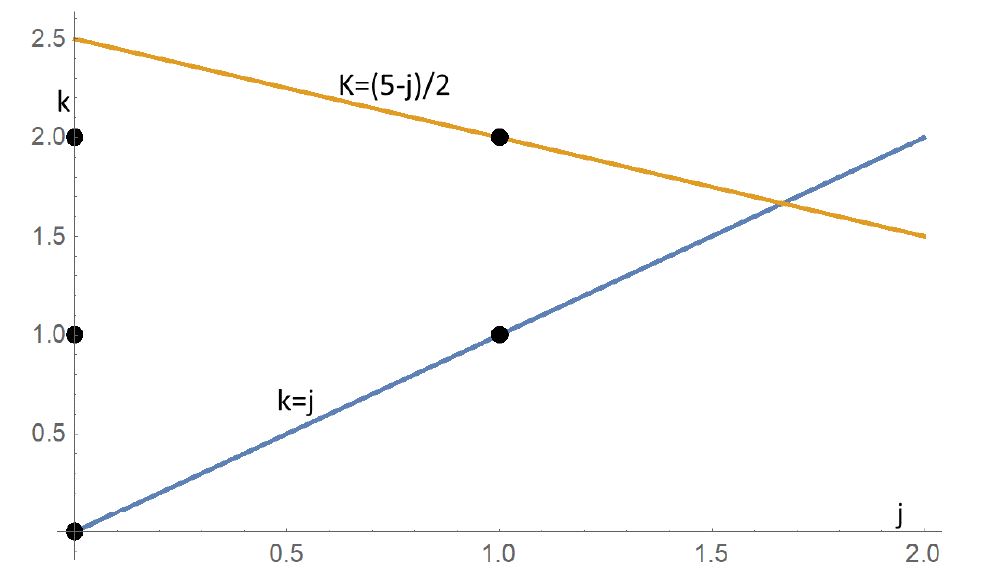}
     \caption{Five states for $C_5$ between $k=j$ and $k=\left(  5-j\right)  /2.$}
     \label{fig:my_label}
    \end{figure}
For larger values of $n,$ the states for three agents will be more numerous
and from state $\left(  j,k\right)  $ can transition to $\left(
j+x,k+y\right)  $ for $x,y\in\left\{  -2,-1,0,1,2\right\}  $ with some
exceptions. For example the nine states in the two extreme corners cannot be
reached (these circles are not filled in). See Figure 9. The state $\left(
j-2,k-2\right)  $ cannot be reached because if the two closest agents move
towards each other two cannot also move closer.%
    \begin{figure}[H]
     \centering
     \includegraphics[width=0.75\textwidth]{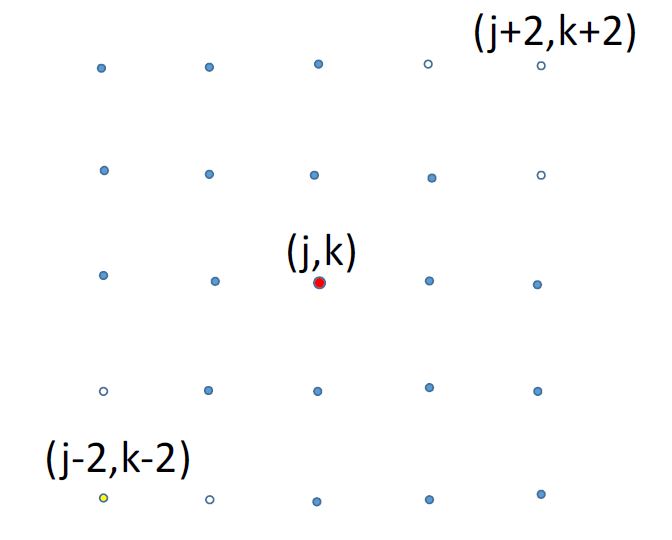}
     \caption{From central state $\left(  j,k\right)  $ 16 of the 25 states can be
reached.}
     \label{fig:my_label}
    \end{figure}
This figure indicates the complexity of analyzing even three agents for larger
cycle graphs and explains why will use simulation techniques to obtain
approximate solutions for larger cycle graphs.

\subsection{Gathering on $C_{5}$}

The gathering problem is defined in the same way on $C_{5}$ as earlier on
$C_{3}.$ The state $\left(  0,0\right)  $ in Figure 7 and 8 is the only
absorbing state and we number the other four from left to right. For example
state 1 is $\left(  1,1\right)  .$ The allowable transitions are shown in
Figure 10, where the letter labels are $a=(0,0),$ $b=\left(  0,2\right)  ,$
$c=\left(  0,1\right)  ,~d=\left(  1,1\right)  ,~e=\left(  1,2\right)  .$ The
non-absorbing states for our transition matrix are thus $e$ $\left(  1\right)
,$ $d\left(  2\right)  ,$ $c\left(  3\right)  $ and $b\left(  4\right)  .$%
    \begin{figure}[H]
     \centering
     \includegraphics[width=0.95\textwidth]{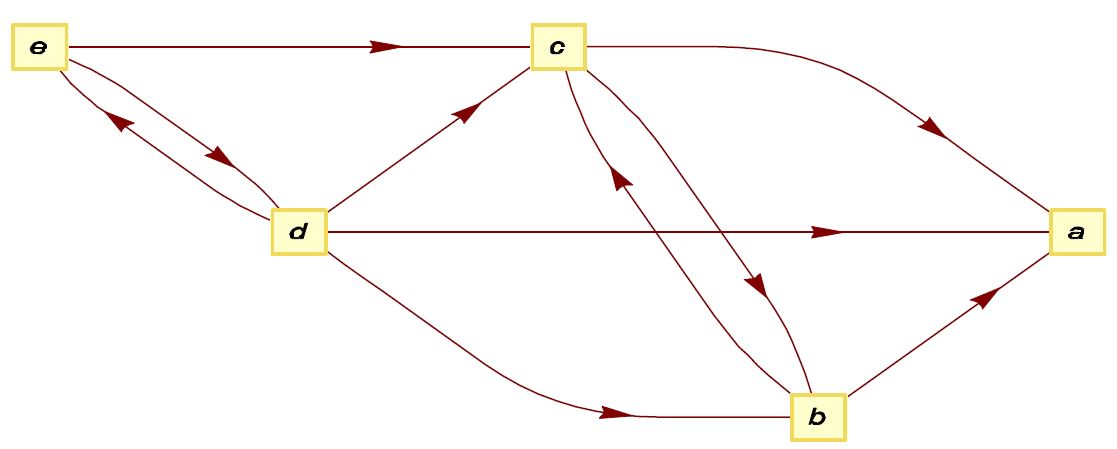}
     \caption{Allowable transitions for $C_5$ gathering.}
     \label{fig:my_label}
    \end{figure}
The non-absorbing states (rows) for our transition matrix are thus $e$
$\left(  1\right)  ,$ $d\left(  2\right)  ,$ $c\left(  3\right)  $ and
$b\left(  4\right)  ,$ and the $4$ by $4$ transition probability matrix for
these states, with all agents adopting $p$ (with $q=1-p$) is given by the $4$
by $4 $ matrix $B,$%

\[
\left(
\begin{array}
[c]{cccc}%
p^{3}+(pq^{2})/2+q^{3}/2 & p^{2}q+(3pq^{2})/4+q^{3}/4 & p^{2}q+(pq^{2}%
)/2+q^{3}/4 & p^{2}q+pq^{2}\\
p^{2}q+(3pq^{2})/4+q^{3}/4 & p^{3}+p^{2}q+(3pq^{2})/4+q^{3}/4 & (pq^{2}%
)/2+q^{3}/4 & p^{2}q+pq^{2}+q^{3}/4\\
0 & 0 & p^{2}+3q^{2}/4 & pq+q^{2}/4\\
0 & 0 & pq+q^{2}/4 & p^{2}+pq+q^{2}/2
\end{array}
\right)
\]

We then, as usual, calculate the fundamental matrix $F=\left(  I_{4}-B\right)
^{-1}$ and evaluate the times $t_{i}$ from state $i$ to absorption (gathering)
as%
\[
\left(
\begin{array}
[c]{c}%
t_{1}\\
t_{2}\\
t_{3}\\
t_{4}%
\end{array}
\right)  =F\left(
\begin{array}
[c]{c}%
1\\
1\\
1\\
1
\end{array}
\right)  =\left(
\begin{array}
[c]{c}%
\frac{-{\Large (8(9+58p+152p}^{2}{\Large +170p}^{3}{\Large +103p}%
^{4}{\Large +20p}^{5}{\Large )}}{{\Large (-1+p)(5+72p+274p}^{2}{\Large +420p}%
^{3}{\Large +349p}^{4}{\Large +140p}^{5}{\Large +20p}^{6}{\Large )}}\\
\\
\frac{{\Large -4(16+121p+331p}^{2}{\Large +385p}^{3}{\Large +249p}%
^{4}{\Large +50p}^{5}{\Large )}}{{\Large (-1+p)(5+72p+274p}^{2}{\Large +420p}%
^{3}{\Large +349p}^{4}{\Large +140p}^{5}{\Large +20p}^{6}{\Large )}}\\
\\
\frac{{\Large (12+20p)}}{{\Large (1+9p-5p}^{2}{\Large -5p}^{3}{\Large )}}\\
\\
\frac{{\Large (8+40p)}}{{\Large (1+9p-5p}^{2}{\Large -5p}^{3}{\Large )}}%
\end{array}
\right)  .
\]
  \begin{figure}[H]
     \centering
     \includegraphics[width=0.95\textwidth]{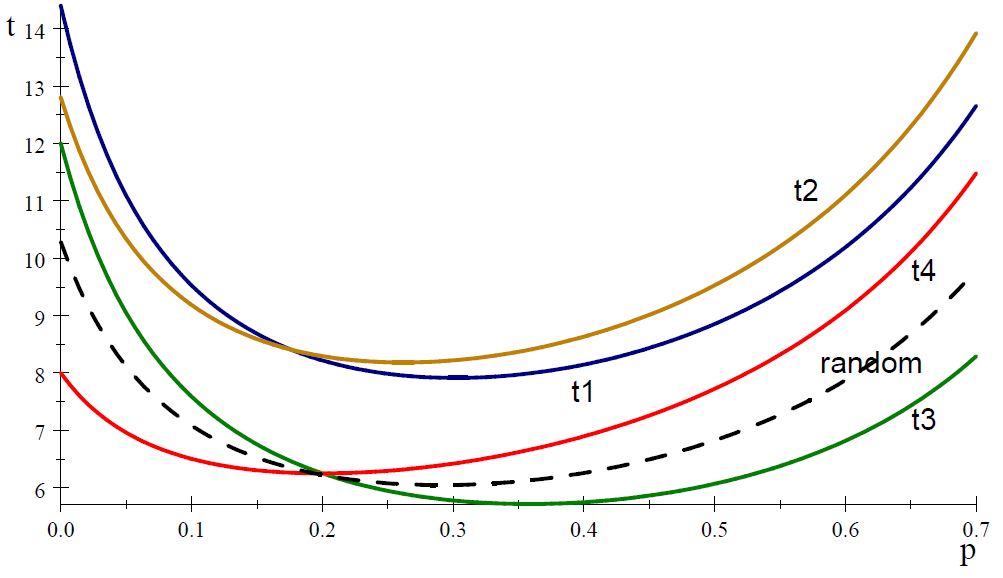}
     \caption{$t_{i}\left(  p\right)  $ starting from states $i=1,\dots,4$ and
random (dashed).}
     \label{fig:my_label}
    \end{figure}
The expected time to absorption for the different initial states are shown in
Figure 11 as functions of $p$ for $i=1$ blue, $2$ yellow, $3$ green, $4$ red.
The random start gives probabilities $\left(  6,6,6,6,1\right)  /25$ (the
final probability $1/25$ is for gathering right away, state 5).%

\begin{align*}
&
\begin{tabular}
[c]{|l|l|l|}\hline
initial state $i\backslash opt$ & $\bar{p}$ & $\bar{t}_{i}$\\\hline
$\left(  1,1\right)  =$ $\#1$ blue & 0.301 & 7.914\\\hline
$\left(  1,2\right)  =\#2$ yellow & 0.262 & 8.183\\\hline
$\left(  0,1\right)  =$ $\#3$ green & 0.358 & 5.716\\\hline
$\left(  0,2\right)  =\#4$ red & 0.200 & 6.250\\\hline
random & 0.283 & 6.794\\\hline
\end{tabular}
\\
&  \text{Table 1. Optimal }p\text{ for }C_{5}\text{ gathering.}%
\end{align*}

\bigskip

\subsection{Social distancing on $C_{5}$}

For the social distancing problem on $C_{5}$ with $m=3$ agents and $d=1$
(higher values of $d$ are not attainable on $C_{n}$, $n<6$) we have the same
five states as in Figure 7. However now the two states $\left(  j,k\right)  $
with $j=1$ are absorbing (distanced) because $j$ is by definition the minimum
pairwise distance between agents. We renumber the remaining states as
$S_{i}=\left(  0,i\right)  ,$ so that $S_{1}=\left(  0,0\right)  ,$
$S_{2}=\left(  0,1\right)  $ and $S_{3}=\left(  0,2\right)  .$ As usual we
only need to calculate the transition probabilities between non-absorbing
states, which are given (with $q=1-p$) by the $3$ by $3$ matrix%

\[
B=\left(
\begin{array}
[c]{ccc}%
p^{3}+q^{3}/4 & 3p^{2}q+3pq^{2}/2 & 3q^{3}/4\\
pq^{2}/4+p^{2}q/2 & p^{3}+pq^{2}+p^{2}q+3q^{3}/8 & p^{2}q/2+3pq^{2}%
/4+q^{3}/8\\
q^{3}/8 & p^{2}q/2+3pq^{2}/4+q^{3}/8 & p^{3}+p^{2}q/2+pq^{2}/4+q^{3}/2
\end{array}
\right)  .
\]
The times $t_{i}$ for absorption from $S_{i},$ shown in Figure 12, are given by%
\begin{gather*}
\left(
\begin{array}
[c]{c}%
t_{1}\\
t_{2}\\
t_{3}%
\end{array}
\right)  =\left(  I_{3}-B\right)  ^{-1}\left(
\begin{array}
[c]{c}%
1\\
1\\
1
\end{array}
\right)  =\\
\left(
\begin{array}
[c]{c}%
-(2(55+134p+316p^{2}+330p^{3}+45p^{4}))/(3(-1+p)(7+67p+126p^{2}+158p^{3}%
+75p^{4}+15p^{5}))\\
-(2(25+214p+232p^{2}+170p^{3}+15p^{4}))/(3(-1+p)(7+67p+126p^{2}+158p^{3}%
+75p^{4}+15p^{5}))\\
-(2(41+142p+152p^{2}+50p^{3}+15p^{4}))/(3(-1+p)(7+67p+126p^{2}+158p^{3}%
+75p^{4}+15p^{5}))
\end{array}
\right)
\end{gather*}%
 \begin{figure}[H]
     \centering
     \includegraphics[width=0.95\textwidth]{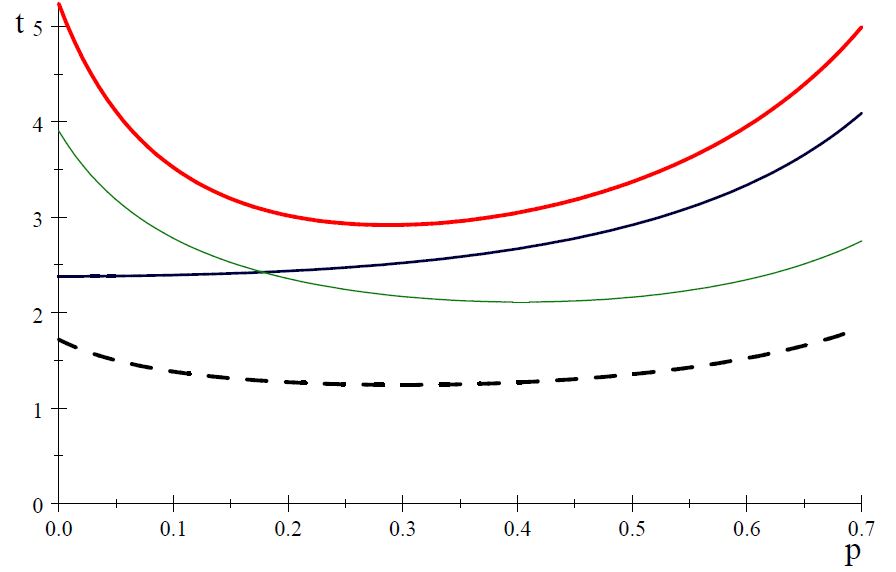}
     \caption{Times $t_{i},i=1$(red,thick), $2$(blue,medium), $3$(green,thin),
and random (dashed).}
     \label{fig:my_label}
    \end{figure}
\begin{align*}
&
\begin{tabular}
[c]{|l|l|l|}\hline
initial state $i\backslash opt$ & $\bar{p}$ & $\bar{t}_{i}$\\\hline
$\left(  0,0\right)  =$ $S_{1}$ red & $0.287$ & $2.\,\allowbreak918$\\\hline
$\left(  0,1\right)  =S_{2}$ blue & $0.006$ & $2.381$\\\hline
$\left(  0,2\right)  =$ $S_{3}$ green & $0.403$ & $2.\,\allowbreak111$\\\hline
\end{tabular}
\\
&  \text{Table 2. Optimal }p\text{ for }C_{5}\text{ Social Distancing.}%
\end{align*}
It is intuitive that social distancing takes the longest when the agents are
in the gathered position. When two are at the same location it takes longer to
disperse when the third is closest to them. The random starting process takes
a shorter time because there is already a high probability (13/25) that they
are dispersed, in which case the dispersal time is $0.$

\section{No Equilibrium in First-to-Disperse Game on $L_{3}$}

In this section we consider the game $G_{1}\left(  n\right)  ,$ where $n$
players start together at the end location $1$ on the line graph $L_{n}$ with
nodes $1,2,\dots,n.$ When some players first achieves "ownership" of a node
(are alone at their node), these players equally split a prize of $1.$ Each
player $i$ has a single strategic variable, her laziness probability $p_{i}.$
We seek symmetric equilibria (with all $p_{i}$ the same) for the cases
$n=2,3.$

We can consider this game as a selfish form of the social distancing problem
with $D=1$ and $m=n$ (so it is also a dispersion problem) on the line graph
$L_{n}.$ In a version of this problem with what we call
\textit{territoriality}, a player who is alone at her node becomes the
\textit{owner} of it. This means she stays there forever and anyone else who
lands there immediately moves away randomly in the next period.\textit{\ }So
the game considered here can be thought of as the beginning of a dispersal
problem with territoriality.

\subsection{The case $n=2$}

This is an almost trivial case. For any $p\in\left(  0,1\right)  ,$ the game
eventually ends with probability one (as soon as one player moves and one
stays, in the same period), with a payoff of 1/2, since both players will
achieve ownership at the same time. So \textit{any} pair $\left(  p,p\right)
$ is a symmetric equilibrium.

\subsection{The case $n=3$}

By symmetry, it is clear that when all players adopt stay probability $p,$
they all have expected payoff of $1/3.$ We will show that when any two players
adopt the same $p,$ the remaining player can get more than $1/3$ by a suitable
strategy, and hence there is no symmetric equilibrium. The algebra involved in
the proof is greatly simplified if we consider the "modified payoff" $M(q,p)$
to the single player (call her Player 1) adopting $q$ when the other two adopt
$p.$ It is modified from the actual payoff by not giving her the prize of 1/3
when there is a tie. So it will be enough to show that Player 1 can always
find a $q$ (for any $p$ adopted by the others) with $M\left(  q,p\right)
\geq1/3$ when a tie is possible and consequently her actual payoff will
strictly exceed $1/3.$ So no triple $\left(  p,p,p\right)  $ can constitute an equilibrium.

\begin{lemma}
\label{p<1/2}Suppose two players use a common strategy $p<1/2.$ Then by always
staying at his original node (laziness $q=1$) the remaining player $\left(
1\right)  $ can get a payoff above $1/3.$
\end{lemma}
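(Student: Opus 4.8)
The plan is to set up the game explicitly and compute Player 1's modified payoff $M(1,p)$ when she always stays at node $1$ while the other two players run a lazy random walk with common laziness $p<1/2$ from node $1$ on $L_3$. Since Player 1 never leaves node $1$, she becomes the owner of node $1$ exactly at the first period in which \emph{both} of the other two players are at node $2$ or beyond — equivalently, at the first period in which node $1$ is vacated by the other two. If that happens before either of the other two players is alone at a node, Player 1 collects the whole prize (under the modified payoff, ties give her nothing, so we only need the event that she is \emph{strictly} first, or tie-free first). So the first step is to describe the joint Markov chain of the two mobile players on $L_3$ starting at state $(1,1)$, with the relevant ``bad'' absorbing events being: (a) one of them is alone at a node while node $1$ still has at least one of them (Player 1 loses or ties), versus the ``good'' event (b) both mobile players have left node $1$ simultaneously before any such solo event, so Player 1 owns node $1$.

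The key steps, in order: first, note that from $(1,1)$ each mobile player independently stays at $1$ with probability $p$ and moves to $2$ with probability $q=1-p$ (node $1$ has degree $1$ in $L_3$). So in one period the pair goes to $(1,1)$ w.p. $p^2$, to $(2,2)$ w.p. $q^2$, and to the asymmetric configuration ``one at $1$, one at $2$'' w.p. $2pq$. In the symmetric branch $(2,2)$ we must then track whether the two players at node $2$ (degree $2$, neighbours $1$ and $3$) separate: each independently stays w.p. $p$, goes to $1$ w.p. $q/2$, goes to $3$ w.p. $q/2$. The good event for Player 1 requires that the two mobile players never again both sit at node $1$ in a tie with her — but since she is \emph{already} the owner the instant they both vacate $1$, the relevant computation is the probability that, starting from $(1,1)$, the first time the pair is \emph{not} both at $1$ is a simultaneous move to $\{2,3\}$ rather than a split that leaves exactly one at $1$. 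Second, I would compute that probability by conditioning on the first ``exit'' period: conditional on leaving the all-at-$1$ block, the pair is at $(2,2)$ with probability $q^2/(q^2+2pq)=q/(q+2p)$ and is split (one at $1$) with probability $2pq/(q^2+2pq)=2p/(q+2p)$. In the split case Player 1 ties or loses (modified payoff $0$); in the $(2,2)$ case she is the sole owner of node $1$, so her modified payoff is $1$ — but one must also check that reaching $(2,2)$ truly guarantees ownership, which it does because she stays and owning is permanent in the territoriality interpretation (or, in the plain dispersal reading, because she is alone at node $1$ at that period, so she scores). Hence $M(1,p)\ge q/(q+2p)$. Third, I would verify $q/(q+2p)>1/3$ for $p<1/2$: this is equivalent to $3q>q+2p$, i.e. $2q>2p$, i.e. $q>p$, i.e. $p<1/2$, which holds by hypothesis — and since ties are genuinely possible (the split branch has positive probability $2pq>0$), the \emph{actual} payoff to Player 1 strictly exceeds her modified payoff, giving actual payoff $>1/3$.

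The main obstacle I expect is bookkeeping of what ``alone at a node'' means in the transient configurations and making sure the modified payoff is a genuine lower bound: in particular, one has to confirm that in the split configuration (one mobile player at $1$, one at $2$) the player who advanced to node $2$ is alone at node $2$, so the game ends \emph{that very period} with a tie between Player 1 and that player (and possibly later the third), which is exactly the tie case the modified payoff discards — so no further branching beyond the first exit period needs to be analysed. Once that is pinned down, the inequality $q/(q+2p)>1/3 \iff p<1/2$ is immediate and the lemma follows. A secondary subtlety is the very first period: there is positive probability $p^2$ of staying in $(1,1)$, but conditioning on eventual exit (which occurs with probability $1$ since $p<1$) removes this harmlessly, and the conditional exit distribution computed above is all that matters.
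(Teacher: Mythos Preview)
Your computation is correct and matches the paper's approach almost exactly: the paper sums the geometric series of ``nobody moves'' periods followed by ``both move'', obtaining $(1-p)^2/(1-p^2)=(1-p)/(1+p)$, while you condition on the first exit period to get $q/(q+2p)=(1-p)/(1+p)$; these are the same quantity, and the inequality $(1-p)/(1+p)>1/3\iff p<1/2$ is identical in both.

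One factual slip worth correcting: in the split configuration (one mobile player stays at node~$1$, the other moves to node~$2$), Player~1 is \emph{not} alone at node~$1$ --- she shares it with the mobile player who stayed --- so this is not a tie for her but an outright loss (payoff~$0$). Consequently, when Player~1 uses laziness~$1$ there are in fact no ties at all, and her actual payoff equals her modified payoff exactly. This does not damage your argument, since you already have the strict inequality $M(1,p)=q/(q+2p)>1/3$ for $p<1/2$, and actual $\geq$ modified suffices; but the closing sentence (``since ties are genuinely possible \ldots\ the actual payoff strictly exceeds her modified payoff'') is incorrect and should simply be deleted.
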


\begin{proof}
It suffices to show that his payoff for general $p$ is given by the expression
$\left(  1-p\right)  ^{2}/\left(  1-p^{2}\right)  $, which is greater than
$1/3$ for $p<1/2.$ To show this, observe that remaining player wins (payoff 1)
unless exactly one of the remaining players moves before both of them move.
Let $O$ be the event exactly one moves and $B$ be the event both move, $N$ be
the event none moves. The winning sequences for are $B,NB,NNB,.. $ . Since $B$
has probability $\left(  1-p\right)  ^{2}$ and $N$ has probability $p^{2},$
these events have total probability
\begin{align*}
&  \left(  1-p\right)  ^{2}+\left(  1-p\right)  ^{2}p^{2}+\left(  1-p\right)
^{2}\left(  p^{2}\right)  ^{2}+\dots\\
&  =\left(  1-p\right)  ^{2}\left(  1+p^{2}+\left(  p^{2}\right)  ^{2}%
+\dots\right) \\
&  =\left(  1-p\right)  ^{2}/\left(  1-p^{2}\right)  .
\end{align*}
Since this has derivative $-2/\left(  p+1\right)  ^{2}$ it is decreasing and
it's value at $p=1/2$ is $1/3.$
\end{proof}

\begin{lemma}
\label{p>1/2}Suppose two players use a common strategy $p>1/2.$ Then, when
foregoing his payoff of 1/3 in a tie, the remaining player can still obtain a
payoff exceeding $1/3$ by always moving (random walk), $q=0.$ When $p=1/2,$
the payoff is exactly 1/3.
\end{lemma}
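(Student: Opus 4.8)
The plan is to reduce everything to a two-state absorbing Markov chain. When Player $1$ always moves ($q=0$) and Players $2,3$ both use laziness $p$, the reflection symmetry of $L_3$ interchanging nodes $1$ and $3$ lets me collapse all ``game-still-running'' configurations to two: State $A$, all three agents on an end node (node $1$ or node $3$); and State $B$, all three on the middle node $2$. The game starts in State $A$, and I will track only Player $1$'s expected \emph{modified} payoffs $W_A,W_B$ from these two states (recall the modified payoff awards $0$, not $1/3$, in a tie).

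First I would read off the one-step transitions from State $A$. Player $1$ moves to node $2$ with certainty (its only neighbour), while each of Players $2,3$ independently stays with probability $p$. If both stay (probability $p^2$), Player $1$ is alone at node $2$ and takes the whole prize; if exactly one stays (probability $2p(1-p)$), that agent is alone at node $1$, so Player $1$ gets $0$; if both move (probability $(1-p)^2$), all three sit at node $2$, i.e.\ State $B$. No tie is possible on this step, so $W_A = p^2 + (1-p)^2 W_B$.

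Next I would enumerate the transitions out of State $B$. Player $1$ goes to node $1$ or node $3$ with probability $1/2$ each; by symmetry I condition on her moving to node $1$ and run through the $3\times3$ independent position choices of Players $2,3$ (stay at node $2$ with probability $p$; move to node $1$ or node $3$ with probability $(1-p)/2$ each). The cases group as: Player $1$ alone at node $1$ — Players $2,3$ together at node $2$ or together at node $3$ — with probability $p^2+(1-p)^2/4$ and payoff $1$; the fully dispersed configuration with probability $p(1-p)$, a tie (modified payoff $0$); Player $1$ sharing node $1$ with one other while the third is alone somewhere, probability $(1-p^2)/2$ and payoff $0$; and a return to ``all three on node $1$'' with probability $(1-p)^2/4$, which is again State $A$ (legitimate since all three strategies are stationary). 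Hence $W_B = \bigl(p^2+(1-p)^2/4\bigr) + \tfrac{(1-p)^2}{4}W_A$.

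Solving the two linear equations gives $M(0,p)=W_A = \dfrac{4p^2\bigl(1+(1-p)^2\bigr)+(1-p)^4}{4-(1-p)^4}$. Since $4-(1-p)^4>0$ on $(0,1)$, the inequality $M(0,p)>1/3$ is equivalent to $3p^2\bigl(1+(1-p)^2\bigr)+(1-p)^4-1>0$, and the left-hand side factors as $2p(2p-1)\bigl(1+(1-p)^2\bigr)$, which is strictly positive for $p\in(1/2,1)$ and vanishes at $p=1/2$ — precisely the two assertions of the lemma, and then the actual payoff strictly exceeds $1/3$ because the tie has positive probability. The only genuine work, and where an error is easiest, is the case bookkeeping out of State $B$: correctly separating the dispersed (tie) configurations from the ones where Player $1$ merely loses, checking the nine probabilities sum to $1$, and confirming the ``return to State $A$'' branch really is the starting configuration so the recursion closes. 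Once the two transition equations are right, the inequality is the one-line factorisation above.
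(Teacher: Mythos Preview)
Your proof is correct and follows essentially the same route as the paper: both set up the identical two-state recursion $W_A=p^2+(1-p)^2W_B$ and $W_B=p^2+(1-p)^2/4+\tfrac{(1-p)^2}{4}W_A$, solve for $W_A$, and check it against $1/3$. Your case bookkeeping from State~$B$ is more explicit than the paper's (you separate the tie and the loss cases and verify the probabilities sum to $1$), and your final factorisation $2p(2p-1)\bigl(1+(1-p)^2\bigr)$ is a slightly cleaner way to read off the sign than the paper's factoring of both numerator and denominator of $A-1/3$, but the substance is identical.
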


\begin{proof}
Let $A\left(  p\right)  $ and $B\left(  p\right)  $ denote the payoff to the
"remaining player" who chooses $q=0$ (always moves) when the others use $p,$
starting respectively with all agents at location 1 (or 3) and all agents at
the middle location $2,$ assuming this player does not accept the payment of
$1/3$ in case of a tie. This last assumption simplifies the algebra. From
position $A$ (all at 1), the remaining player must go to location 2, so there
are three possible subsequent states: all go to middle location 2 (payoff
$B$), the other players stay at location $1$ (payoff 1) or if he alone stays
at location 1. Other outcomes lead to payoff $0$ and can be ignored. This
gives the formula $A=A\left(  q,p\right)  $ in terms of $B=B\left(
q,p\right)  $.
\[
A=\left(  1-p\right)  ^{2}B+p^{2}1
\]

Similarly, if the players all start at the middle location 2, the remaining
player moves to an end (call this end 1). Now there are three subsequent
states: both others stay in the middle (payoff 1), both of the others go to
the same end as the remaining player (payoff $A),$ both of the other players
go to the other end (payoff 1). The other states are either have payoff $0$ or
have payoff 1/3, which we are reducing to $0$ in this calculation. So we have
$B=B\left(  p\right)  $ given by%
\begin{align*}
B  &  =p^{2}1+\left(  1/4\right)  \left(  1-p\right)  ^{2}A+\left(
1/4\right)  \left(  1-p\right)  ^{2}1\\
A  &  =\left(  1-p\right)  ^{2}B+p^{2}1
\end{align*}
We are only interested in the solution $A,$ starting from an end, which is%
\[
A=\frac{-4p+14p^{2}-12p^{3}+5p^{4}+1}{4p-6p^{2}+4p^{3}-p^{4}+3},\text{ which
we want to show is }>\frac{1}{3}\text{.}%
\]
We calculate
\begin{align*}
A-\frac{1}{3}  &  =\frac{8p}{3}\frac{2p^{3}-5p^{2}+6p-2}{-p^{4}+4p^{3}%
-6p^{2}+4p+3}\\
A-\frac{1}{3}  &  =\frac{8p}{3}\frac{\left(  2p-1\right)  \left(
-2p+p^{2}+2\right)  }{\left(  1+2p-p^{2}\right)  \left(  -2p+p^{2}+3\right)  }%
\end{align*}
The denominator is positive for all $p\geq1/2$ and the numerator is positive
for $p>1/2$ and equal to 0 for $p=1/2.$
\end{proof}

\begin{theorem}
There is no symmetric Nash equilibrium for the game $G_{1}\left(  3\right)  .$
\end{theorem}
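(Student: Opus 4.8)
The plan is to show that for \emph{every} common stay-probability $p\in[0,1]$ used by two of the three players, the symmetric profile $(p,p,p)$ admits a profitable unilateral deviation by the third player (call her Player~1), so no symmetric profile is a Nash equilibrium. The starting observation, already noted in the text, is that under any symmetric profile the unit prize is split so that, by the symmetry of $L_3$ and of the profile, each player's expected share is exactly $1/3$; hence it suffices to exhibit for Player~1 a response $q$ giving payoff strictly above $1/3$. I would partition the argument by whether $p<1/2$, $p>1/2$, or $p=1/2$.

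The two off-boundary ranges follow at once from the lemmas already proved. If $p<1/2$, then by Lemma~\ref{p<1/2} the choice $q=1$ (never move) gives Player~1 the payoff $(1-p)^2/(1-p^2)>1/3$; note that with $q=1$ no tie involving Player~1 is ever possible, so this expression is literally her payoff, not a modified one. If $p>1/2$, then by Lemma~\ref{p>1/2} the choice $q=0$ (random walk) gives modified payoff $A(p)>1/3$, and since the true payoff is at least the modified payoff (the modification only discards the share Player~1 would collect in tied outcomes), her true payoff also exceeds $1/3$. In particular the extreme values $p=0$ and $p=1$ are covered by these two cases.

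The crux is the boundary value $p=1/2$, where both obvious deviations merely \emph{match} the equilibrium value: with $q=1$ the payoff is $(1-p)^2/(1-p^2)=1/3$, and with $q=0$ the modified payoff is $A(1/2)=1/3$, precisely the case flagged in Lemma~\ref{p>1/2}. Here I would take $q=0$ and argue that Player~1's \emph{true} payoff is strictly larger than her modified payoff, because with $q=0$ against $(1/2,1/2)$ a tie occurs with positive probability. Concretely, the configuration with all three players together at the middle node $2$ is reached with positive probability (it is exactly the ``both others move'' continuation from the start), and from it there is a positive chance that in the next period Player~1 moves to one end, one of the other two stays at node~$2$, and the third moves to the opposite end — a three-way tie, to which the modified payoff assigns $0$ while the true payoff assigns $1/3$. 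Hence Player~1's true payoff strictly exceeds $A(1/2)=1/3$, so $(1/2,1/2,1/2)$ is not an equilibrium either. Combining the three cases yields the theorem.

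I expect the only genuinely delicate step to be $p=1/2$: one must resist a crude payoff comparison (both candidate deviations only tie the equilibrium payoff in the \emph{modified} game) and instead use that the modified payoff strictly understates the true payoff exactly when ties have positive probability, together with a short, careful check of who is ``alone'' in the tie-producing transition. The cases $p\neq1/2$ are immediate from Lemmas~\ref{p<1/2} and~\ref{p>1/2}.
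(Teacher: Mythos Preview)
Your proposal is correct and follows essentially the same approach as the paper: the same three-way split on $p$, the same appeal to Lemmas~\ref{p<1/2} and~\ref{p>1/2} for $p\neq 1/2$, and the same handling of $p=1/2$ by taking $q=0$ and arguing that a three-way tie (all three at the middle, then spreading to distinct nodes) occurs with positive probability, so the true payoff strictly exceeds the modified value $A(1/2)=1/3$. Your added remark that with $q=1$ no tie involving Player~1 can occur (so the Lemma~\ref{p<1/2} expression is the actual payoff) is a nice clarification not made explicit in the paper.
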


\begin{proof}
Lemma \ref{p<1/2} shows that $p<1/2$ cannot form a symmetric equilibrium and
Lemma \ref{p>1/2} shows that $p>1/2$ cannot form a symmetric equilibrium.
Consider that Players 2 and 3 adopt $p=1/2$ and Player 1 adopts $q=0.$
According to Lemma \ref{p>1/2} , Player 1 gets a modified payoff of 1/3
(without getting a prize when there is a tie). However a tie has positive
probability. It occurs when 2 and 3 move in the first period and then one
stays in the middle and the other moves to the end node not occupied by 1. So
the payoff (unmodified) to Player 1 in this case exceeds $1/3,$ her payoff
when all three adopt $p=1/2.$
\end{proof}

Of course in this analysis, the laziness strategies should be thought of as
pure strategies. If the players use mixed strategies which are distributions
of $p$'s, there might be a symmetric equilibrium.

\section{Simulation of Social Distancing on the Line and Grid Graphs}

For larger problems with respect to $m$ and $n,$ we determine expected time to
reach social distancing with $D=2$ by simple Monte Carlo simulation methods.
We place the $m$ agents in some specified initial locations on the network.
Then we have them move independently according to LRW's with the same $p$
value. After each step, we find the minimum pairwise distance $d$ between
agents in the current state. If $d\geq D$ ($=2$ for the examples here), we
stop and record the time $T.$ We carry out $5,000$ trials and record the mean.
Contrary to our earlier results, we find for the line and the two dimensional
grid that it is optimal for the agents to follow (independent) random walks,
$p=0.$ When $n$ is very small, it takes a little longer to reach social distancing.

\subsection{The two dimensional $k\times k$ grid $GR_{k}$}

In practice, social distancing is often to be achieved by individuals in a
planar region. A good network model for this is the two dimensional grid graph
$GR_{k}$ with $n=k^{2}$ nodes in the set $\left\{  (i,j\right\}  :1\leq
i,j\leq k\},$ as shown in Figure 13.

    \begin{figure}[H]
     \centering
     \includegraphics[width=0.95\textwidth]{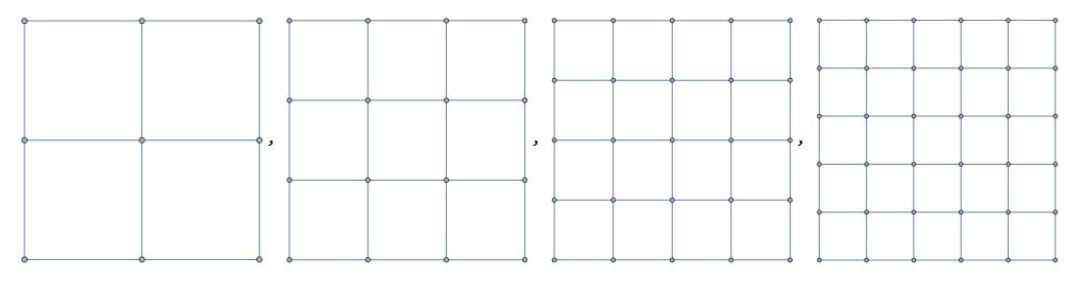}
     \caption{Two dimensional grids $GR_k,k=3,...,6.$.}
     \label{fig:my_label}
    \end{figure}

A natural starting state is the one with all agents at a corner node (say
$\left(  1,1\right)  )$ or at the center (both coordinates $\left\lfloor
k/2\right\rfloor $. Figure 14 illustrates these times for values of $p$ spaced
at distance $0.2.$ Note that for all the four values of $k,$ the mean times to
reach distance $d=2$ are increasing in $p$. The means that the random walk,
$p=0,$ is the best. In terms of grid size $k,$ It takes a bit longer for the
$3\times3$ grid because reflections from the boundary are more common. For
larger values of $k$ the times do not appear to depend much on $k.$
  \begin{figure}[H]
     \centering
     \includegraphics[width=0.8\textwidth]{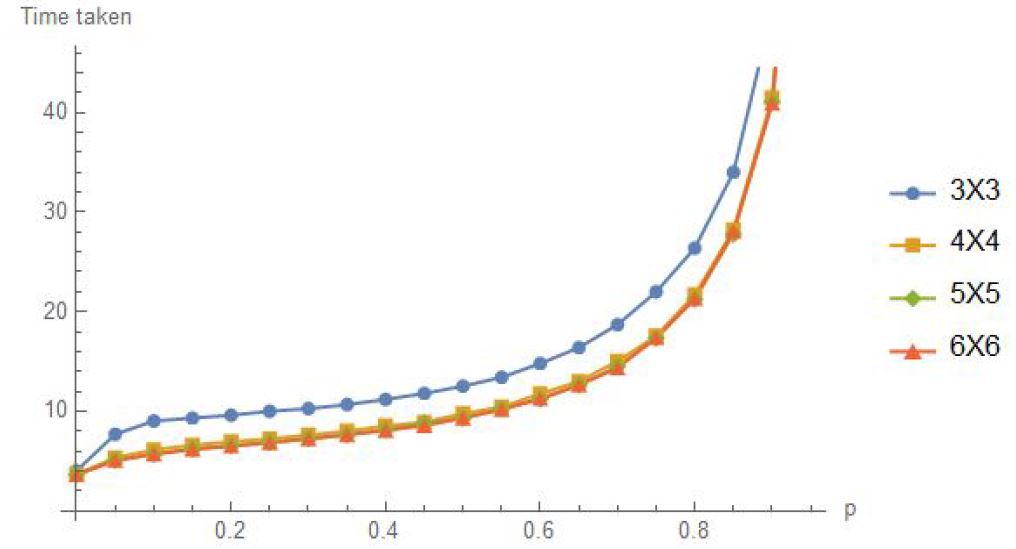}
     \caption{Time to $d=D\equiv2$ on $GR_k,k=3$ to $6,$ from corner start.}
     \label{fig:my_label}
    \end{figure}
If the starting state consists of all agents at the center of the grid then we
have similar result, as seen in Figure 15.%
  \begin{figure}[H]
     \centering
     \includegraphics[width=0.8\textwidth]{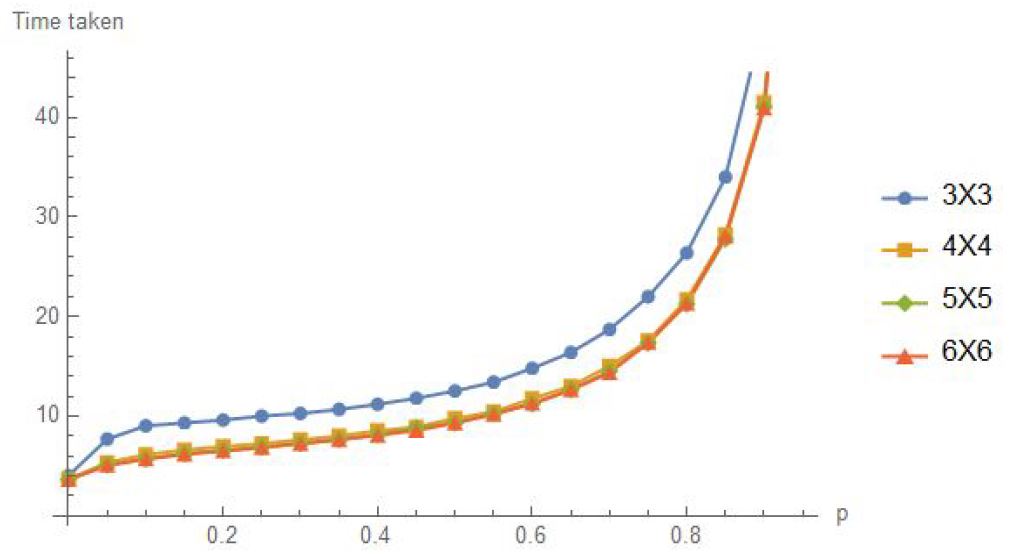}
     \caption{Time to $d=2$ from center start.}
     \label{fig:my_label}
    \end{figure}

\subsection{The line graph $L_{n}.$}

\qquad The graph $L_{n}$ has $n$ nodes arranged in a line and numbered from
the left as $1$ to $n.$ Like the grid graph, a natural starting state is
either all at an end (say node $1$) or all at the center. We find that the
common value of $p$ should be $0,$ that is, the agents should adopt
independent random walks. Figure 16 shows this for a left start and Figure 17
shows this for a center start, at $\left\lfloor n/2\right\rfloor $.
   \begin{figure}[H]
     \centering
     \includegraphics[width=0.8\textwidth]{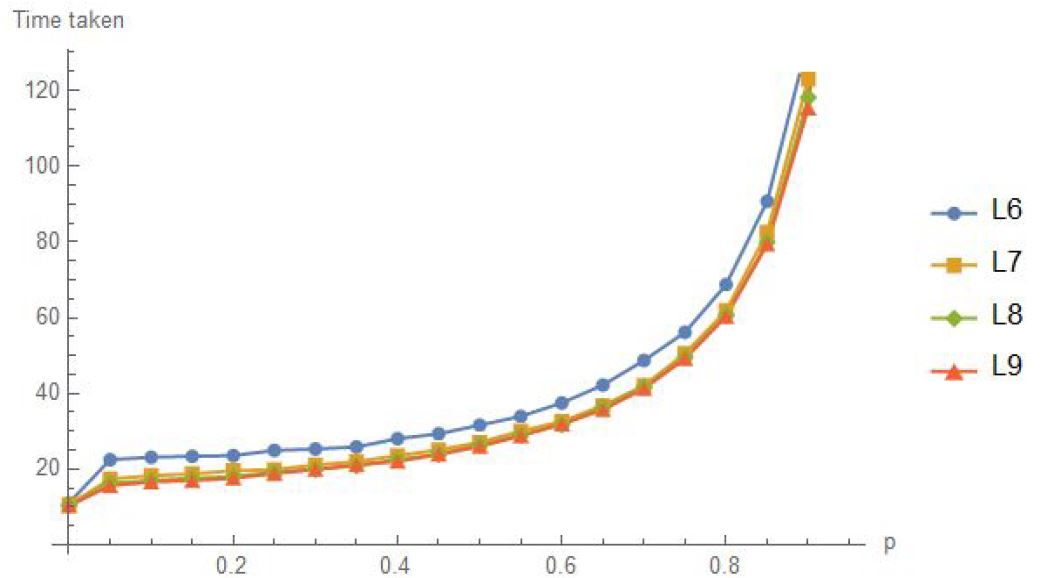}
     \caption{Time to $d=2$ for left node start on $L_n.$}
     \label{fig:my_label}
    \end{figure}

    \begin{figure}[H]
     \centering
     \includegraphics[width=0.85\textwidth]{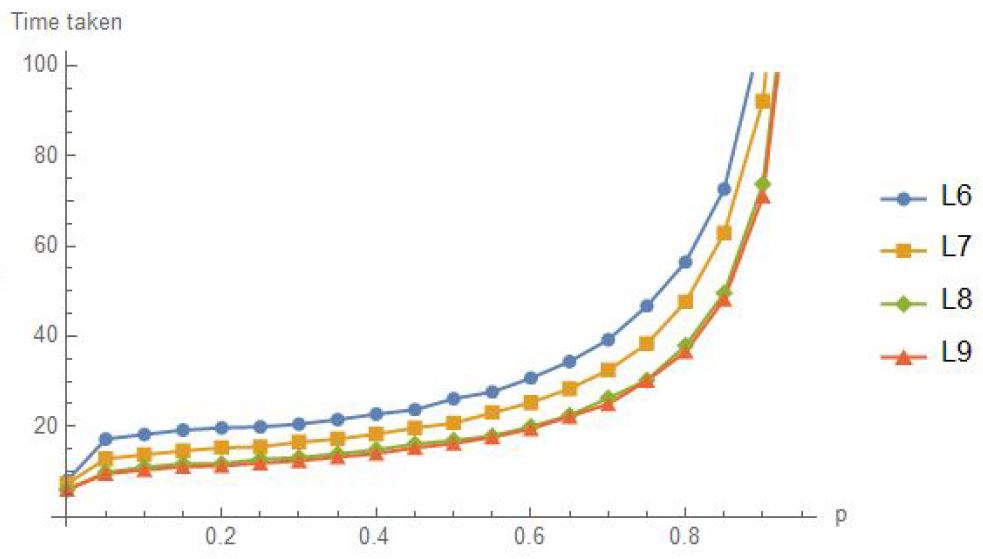}
     \caption{Time to $d=2$ for a central start.}
     \label{fig:my_label}
    \end{figure}
\section{Conclusions}

This article introduced the Social Distancing Problem on a connected graph,
where agents have a common goal to have all their pairwise distances be at
least a given number $D.$ While different motions and information could be
given to the agents for this problem, we give them only local knowledge of the
graph and no knowledge of locations of other agents. So they know only the
degree of their current node and lack memory. These assumptions limit the
motions of the agents to Lazy Random Walks. We showed how to optimize their
common laziness value $p$ to achieve social distancing in the least expected
number of steps. We considered various graphs and both exact and simulated
methods. In some cases the optimal motion was a random walk ($p=0$) or a
loop-random walk (choosing their current node with the same probability as
each adjacent one). We also considered variations where agents know the
current population $k$ of their node and can choose laziness $p_{k}$
accordingly. While mostly we consider the common-interest team version of the
problem, we also studied cases where agents had individual selfish motives -
we showed that in some cases no symmetric equilibrium exists.

We expect this area of research to be enlarged to other assumptions:

\begin{itemize}
\item Agents know locations of some or all of the other agents.

\item Agents have some memory.

\item Agents know the whole graph.

\item Agents can gain `territoriality over a node'.
\end{itemize}

It turns out that our model of mobile agents on a graph is also useful for
some other problems (goals). One goal is multi rendezvous, or gathering, where
the common goal is for all agents to occupy a common node. This extended
earlier results limited to two agents. Another problem is the search game
where agents come in two types, searchers and hiders, with obvious associated
goals. Here our methods extend known results to multiple searchers. Many other
results in search games could usefully be extended in a similar way.

In this first paper on social distancing, we have restricted ourselves to
considering only some simple classes of graph and small sizes. It is to be
hoped that further research in this area will find new and stronger methods
able to study general graphs.


\begin{thebibliography}{99}                                                                                               %


\bibitem {Alpern 1974}Alpern, S. (1974). The search game with mobile hider on
the circle. \textit{Differential games and control theory}, 181-200.

\bibitem {AlpRendProb}Alpern, S. (1995) The Rendezvous Search
Problem,\textit{\ SIAM Journal on Control and Optimization}, 33, 3, 673-683

\bibitem {AlpRendSurvey}Alpern, S (2002a). Rendezvous Search : A Personal
Perspective, \textit{Operations Research}, 50, 5, 772-795.

\bibitem {AlpLabeledGraphs}Alpern, S (2002b). Rendezvous search on labeled
networks, \textit{Naval Research Logistics}, 49, 3, 256-274

\bibitem {AlpNew Approach}Steve Alpern (2011) A new approach to Gal's Theory
of Search Games on Weakly Eulerian networks, \textit{Dynamic Games and
Applications}, 1, 2, 209-219.

\bibitem {AlpBasEss}Alpern, S., Baston, V. J. and Essegaier, S. (1999).
Rendezvous search on a graph.\textit{Journal of Applied Probability}, 36, 1, 223-231

\bibitem {Alpern-Gal book}Alpern, S. and Gal, S. (2003). The Theory of Search
Games and Rendezvous. Kluwer, 2003.

\bibitem {AlpRey}Alpern, S. and Reyniers, D. J. (2002). Spatial dispersion as
a dynamic coordination problem. \textit{Theory and Decision}, 53, 1, 29-59

\bibitem {AndersonW}Anderson, E. J. and Weber, R. R. (1990). The rendezvous
problem on discrete locations. \textit{J. Appl. Probab}. 28, 839-851.

\bibitem {baston 1997}Baston, V. (1999). Two rendezvous search problems on the
line. \textit{Naval Research Logistics} 46: 335--340, 1999

\bibitem {blume}Blume, A., and Franco, A.M. (2007). Decentralized learning
from failure. \textit{Journal of Economic Theory} 133, 1, 504-523.

\bibitem {competitive search moving}Duvocelle, B., Flesch, J., Staudigl, M.,
Vermeulen, D. (2020). A competitive search game with a moving target.
\textit{arXiv}:2008.12032

\bibitem {Gal 1979}Gal, S. (1979). Search Games with Mobile and Immobile
Hiders. \textit{SIAM J. Control and Optimization} 17, 99-122.

\bibitem {Gal book}Gal, S. (1980). Search Games. Academic Press, New York.

\bibitem {GalRendLine}Gal, S. (1999). Rendezvous search on the line.
Operations Research 47, 974-976

\bibitem {does sd matter}Greenstone, M. and Nigam, V. (2020). Does Social
Distancing Matter? \textit{University of Chicago, Becker Friedman Institute
for Economics.} Working Paper No. 2020-2620.

\bibitem {Grenager dispersion}T. Grenager, R. Powers and Y. Shoham (2002).
Dispersion Games: General Definitions and Some Specific Learning Results.
\textit{Proc. AAAI}.

\bibitem {Howard}Howard, J. V. (1999). Rendezvous search on the interval and
circle. \textit{Operations Research} 47, 4, 550-558.

\bibitem {Isaacs }Isaacs, R. (1965). Differential Games. Wiley, New York.

\bibitem {Kemeny}Kemeny, J., Snell, L., and Thompson, G. (1974). Introduction
to Finite Mathematics. Third Edition. Prentice-Hall, New Jersey.

\bibitem {LimAlpernBeck}Lim, W. S., Alpern, S. and Beck, A. (1997) Rendezvous
search on the line with more than two players, \textit{Operations Research},
45, 3, 357-364

\bibitem {Nakai}Nakai, T. (1986). A search game with one object and two
searchers. \textit{Journal of applied probability}, 23, 3, 696--707

\bibitem {Ruckle}Ruckle, W. (1983). Geometric Games and Their Applications.
Pitman, Boston.

\bibitem {Simanjutak}Simanjuntak, M. (2014). A Network Dispersion Problem for
Non-communicating Agents . In Doctoral Consortium - DCAART, (ICAART 2014) 66-72.

\bibitem {Weber}Weber, R. (2012). Optimal symmetric rendezvous search on three
locations. \textit{Mathematics of Operations Research} 37,1, 111-122.

\bibitem {Zelikin}Zelikin, M. I. (1972). On a differential game with
incomplete information. \textit{Soviet Math. Dokl.} 13, 228-231.
\end{thebibliography}
\end{document}